\newtheorem{Th}{Theorem}[section]
\newtheorem{Prop}[Th]{Proposition}
\newtheorem{Lemma}[Th]{Lemma}
\newtheorem{Cond}[Th]{Condition}
\newtheorem{Quest}{Question}
\newtheorem*{Th*}{Theorem}
\theoremstyle{definition} %%%%
\newtheorem{Def}[Th]{Definition}
\newtheorem{Remark}[Th]{Remark}
\newtheorem{Example}[Th]{Example}
\newcommand{\Sing}{\operatorname{Sing}}
\newcommand{\Supp}{\operatorname{Supp}}
\newcommand{\MP}{\mathds{P}}
\newcommand{\MQ}{\mathds{Q}}
\newcommand{\MZ}{\mathds{Z}}
\title{Rationally connected non Fano type varieties}
\date{}
\author{Igor Krylov\footnote{
This is a pre-print of an article published in European Journal of Mathematics. The final version is available online at https://doi.org/10.1007/s40879-017-0201-1 }}
\begin{document}

\maketitle

\begin{abstract}
Varieties of Fano type are very well behaved with respect to the MMP, and they are known to be rationally connected. We study a relation between the classes of rationally connected varieties and varieties of Fano type. It is known that these classes are birationally equivalent in dimension 2. We give examples of rationally connected varieties of dimension $\geqslant 3$ which are not birational to varieties of Fano type, thereby answering the question of Cascini and Gongyo \cite[Question~5.2]{CG}.
\end{abstract}

%\noindent

%%%%%%%%%%%%%%%%%%%%%%%%%%%%%%%%%%%%%%%%%%%%%%%%%%%%%%%%%%%
%%%%%%%%%%%%%%%%%%%%%%%%%%%%%%%%%%%%%%%%%%%%%%%%%%%%%%%%%%%
%%%%%%%%%%%%%%%%%   Introduction     %%%%%%%%%%%%%%%%%%%%%%
%%%%%%%%%%%%%%%%%%%%%%%%%%%%%%%%%%%%%%%%%%%%%%%%%%%%%%%%%%%
%%%%%%%%%%%%%%%%%%%%%%%%%%%%%%%%%%%%%%%%%%%%%%%%%%%%%%%%%%%

\section{Introduction}

The log minimal model program (MMP) is one of the key notions in birational geometry. Finding when can we run the MMP for
a pair $(X,D)$ ($D$-MMP) is one of the central subjects and is still being developed. If $X$ is a variety of Fano type then we can run the $D$-MMP on it for any divisor $D$ on $X$ \cite[Corollary~2.7]{PS}. 
We say that a normal projective variety $X$ is of Fano type if there is an effective $\mathds{Q}$-divisor $H$ on $X$ such that the pair $(X,H)$ is Kawamata log terminal and $-(K_X+H)$ is ample. Varieties of Fano type have been introduced by Shokurov and Prokhorov in \cite{PS}. The Fano type property is preserved under flips and contractions (\cite[Lemma~2.8]{PS}). Thus if we run the MMP on a variety of Fano type, then on any stage we have a variety Fano type. In particular the result of running the MMP on a variety of Fano type is a variety of Fano type. Varieties of Fano type appear naturally as quotients of Fano varieties by a finite group and as resolutions of some singular Fano varieties.

It would be nice if we could say if in a given birational class there is a variety which behaves well with respect to the $D$-MMP for any divisor $D$. For example a variety of Fano type. On the other hand it is known that varieties of Fano type are rationally connected \cite[Theorem~1]{Zh}. Thus it is natural to ask if the converse is true. Even in dimension $2$ it is not: a blow up of $10$ points on $\mathds{P}^2$ in general position is not a variety of Fano type. We may broaden the converse statement, however.

\begin{Quest}[{\cite[Question~5.2]{CG}}] \label{Q1}
Let $X$ be a rationally connected variety. Is $X$ birationally equivalent to a variety of Fano type?
\end{Quest}

In dimension $2$ the answer is positive since every rationally connected surface is rational. In this paper we construct examples for which the answer is negative in dimension $\geqslant3$. Namely the purpose is to prove the following two theorems.

\begin{Th} \label{TechTh}
\begin{enumerate}[(i)]
	\item Let $W$ be a generic smooth divisor of degree $(2M,2l)$ on $\mathds{P}^M\times \mathds{P}^1$, $M\geqslant3$, $l\geqslant3$. Let $\sigma:V\to \mathds{P}^m\times\mathds{P}^1$ be the double cover branched over $W$. Then every birational map from $V$ to a variety admitting a Mori fiber space is an isomorphism.
	\item Let $W=(\mathds{C}^6\setminus Z\big(\langle u,v\rangle\cap\langle x,y,z,w\rangle\big)/(\mathds{C}^*)^2$, where $(\mathds{C}^*)^2$-action is given by the matrix 

\[\left(\begin{array}{cccccc}
u&v&x&y&z&w\\
0&0&1&1&2&3\\
1&1&-3&-3&0&0\end{array}\right)\]

and let $X\subset W$ be the hypersurface of bi-degree $(6,0)$ given by the equation 
\begin{align*}
Q=w^2+z^3+(u^{12}+v^{12})M_4(x,y)z+R_{18}(u,v)x^2y^2(x-y)^2,
\end{align*}
where $M_4$ and  $R_{18}$ are generic homogeneous polynomials of degrees $4$ and $18$ respectively. Then every birational map from $X$ to a variety admitting a Mori fiber space is an isomorphism.
\end{enumerate}

\end{Th}

\begin{Th} \label{MainTh}
The varieties $V$ and $X$, described in Theorem \ref{TechTh}, are not birationally equivalent to a variety of Fano type.
\end{Th}

Since $V$ and $X$ are rationally connected (Lemma \ref{RC}) we conclude that the answer to Question \ref{Q1} is negative for these varieties.

To prove Theorem \ref{TechTh} we use techniques of birational rigidity. We say that a Fano fibration $\pi:U\to \mathds{P}^1$ is \emph{birationally superrigid} if any birational map $\chi:U\dasharrow Y$ to a variety, admitting a Mori fiber space $Y\to Z$, is a \emph{fiberpreserving map}. That is the following diagram is commutative
\begin{align*}
\xymatrix
{ 
	U\ar@{-->}[r]^\chi \ar[d]_\pi & \ar[d] Y \\
	\mathds{P}^1\ar@{=}[r] & Z
}
\end{align*}
and $\chi$ is an isomorphism on a generic fiber of $\pi$. In particular it means that $U$ is not birational to a Fano variety with Picard rank $1$ or a conic bundle and hence is not rational. 

The birational superrigidity is usually proven using Noether-Fano inequality, its origin is the theorem on generation of the Cremona group $\operatorname{Cr}_2=\operatorname{Bir}(\mathds{P}^2)$ by $\operatorname{PGL}_3(\mathds{C})$ and the standard quadratic transformation. Let $U$ and $U^\prime$ be varieties which admit a Mori fiber space and suppose $\chi:U \dasharrow U^\prime$ is a birational map which is not an isomorphism. Then by Noether-Fano inequality (Proposition 2.8) there is a very singular $\mathds{Q}$-linear system $\mathcal{M}$ on $U$ which is $\mathds{Q}$-linearly equivalent to $-K_U$ on fibers of $\pi$.

The variety $V$ admits a Fano fibration  $\pi$ over $\mathds{P}^1$, the map $\pi$ is the composition of the double cover $\sigma$ and the projection onto $\mathds{P}^1$. Indeed, the restriction of $\sigma$ to a fiber of $\pi$ is a double cover of $\mathds{P}^M$ branched over a hypersurface of degree $2M$, thus the fiber of $\pi$ is a Fano variety. We use the techniques developed in \cite{Pukh_Dir_Prod} and \cite{Pukh_Sing_Fano} to prove $(i)$ of Theorem \ref{TechTh}: we impose generality conditions on the branching divisor which help us to control the singularities of linear systems.

The variety $X$ admits a del Pezzo fibration of degree $1$. Indeed, the variety $W$ in $(ii)$ of Theorem \ref{TechTh} admits a $\mathds{P}(1,1,2,3)$-fibration over the projective line. Since $X$ is a sextic in every fiber, this fibration restricted to $X$ is a del Pezzo fibration of degree $1$. A smooth variety admitting a del Pezzo fibration of degree $1$ is birationally superrigid if it satisfies the famous $K^2$-condition, that is $K^2$ is not in the interior of Mori cone \cite[Theorem~2.1]{Pukh123}. Unfortunately the superrigidity is not enough for us as it allows birational maps which are not isomorphism to other del Pezzo fibrations of degree $1$: fiberpreserving maps. Also $X$ has $36$ ordinary double points (Lemma \ref{SingX}), thus \cite[Theorem~2.1]{Pukh123} is not applicable. Furthermore a priori $X$ may not be $\mathds{Q}$-factorial. We use Lefschetz-type result (\cite[Theorem~4.1]{Rams}) to prove that $X$ admits a Mori fiber space over $\mathds{P}^1$. 

This allows us to use Noether-Fano inequality. We use the results from \cite{Pukh123} and \cite{JPDP} to show that the linear systems may be sufficiently singular only at the cusps of curves of the anticanonical degree $1$ in a fiber of del Pezzo fibration. Then we prove that all such points are the ordinary double points of $X$ or of the fibers. We show that the pair $(X,\mathcal{M})$ is not too singular at these points. At last we conclude that all the birational maps from $X$ to varieties admitting a Mori fiber space are isomorphisms.

To prove Theorem \ref{MainTh}, we show that any variety of Fano type is birationally equivalent to a variety with big anticanonical divisor admitting a Mori fiber space (Proposition \ref{MMP}). On the other hand $-K_V$ and $-K_X$ are not big (Lemma \ref{KBig4} and Lemma \ref{KBig3}).

\subsection*{Acknowledgments}
The author would like to thank John Ottem and Ivan Cheltsov, for bringing this question to his attention and for many helpful conversations, and Antony Manioca, Milena Hering, Johan Martens, and Michael Wemyss for useful suggestions. The author also expresses his gratitude to the referee for pointing out many ways to improve this paper. The author was supported by an Edinburgh PCDS scholarship. 

%%%%%%%%%%%%%%%%%%%%%%%%%%%%%%%%%%%%%%%%%%%%%%%%%%%%%%%%%%%
%%%%%%%%%%%%%%%%%%%%%%%%%%%%%%%%%%%%%%%%%%%%%%%%%%%%%%%%%%%
%%%%%%%%%%%%%%%%%   Preliminaries    %%%%%%%%%%%%%%%%%%%%%%
%%%%%%%%%%%%%%%%%%%%%%%%%%%%%%%%%%%%%%%%%%%%%%%%%%%%%%%%%%%
%%%%%%%%%%%%%%%%%%%%%%%%%%%%%%%%%%%%%%%%%%%%%%%%%%%%%%%%%%%

\section{Preliminaries}
All the varieties in this paper are considered to be normal, projective, and defined over $\mathds{C}$ unless stated otherwise. 
We write $\sim$ ($\sim_\mathds{Q}$) for linear ($\mathds{Q}$-linear) equivalence of divisors ($\mathds{Q}$-divisors). All Mori fiber spaces are assumed to be in the Mori category, that is terminal and $\MQ$-factorial. When we say that $\pi: X \to \MP^1$ is a Fano (del Pezzo) fibration we assume that it is a Mori fiber space with a generic fiber being a Fano variety (del Pezzo surface).

\begin{Def}[{\cite[p.~6]{Pr_C}}]
Let $D$ be a $\mathds{Q}$-divisor on a variety $X$ such that $K_X+D$ is $\mathds{Q}$-Cartier. 
Let $\sigma:\widetilde{X} \to X$ be a birational morphism and let $\widetilde{D}=\sigma^{-1}(D)$ be the proper transform of $D$. 
Then we can write
\begin{align*}
K_{\widetilde{X}}+\widetilde{D}\sim_\mathds{Q}\sigma^*(K_X+D)+\sum_E a(E,X,D)E,
\end{align*}
where $E$ runs through all the distinct exceptional divisors of $\sigma$ on $\widetilde{X}$ and $a(E,X,D)$ is a rational number. The number $a(E,X,D)$ is called the \emph{discrepancy} of a divisor $E$ with respect to the pair $(X,D)$.

We say that the pair $(X,D)$ is \emph{terminal} (resp. \emph{canonical}, \emph{log terminal}, \emph{log canonical}) \emph{at a subvariety} $Z$ of $\operatorname{codim}_X Z\geqslant 2$ if for every birational morphism $\sigma$ to $X$ the inequality $a(E,X,D)>0$ (resp. $a(E,X,D)\geqslant0$, $a(E,X,D)>-1$, $a(E,X,D)\geqslant-1$) holds for every prime $\sigma$-exceptional divisor $E$ such that $\sigma(E)=Z$.

Let $\mathcal{M}$ be a linear system on $X$ and let $\lambda\in \mathds{Q}_{>0}$, we say that the pair $(X,\lambda \mathcal{M})$ is \emph{terminal} (resp. \emph{canonical}, \emph{purely log terminal}, \emph{log canonical}) if for every subvariety $Z\subset X$ of $\operatorname{codim}_X Z\geqslant 2$ the pair $(X,\lambda D)$ is terminal (resp. canonical, log terminal, log canonical) at $Z$ for a generic divisor $D\in \mathcal{M}$. Note that the system $\mathcal{M}$ may consist of only one divisor $D$, in this case we say that $(X,\lambda D)$ is \emph{terminal} (resp. \emph{canonical}, \emph{purely log terminal}, \emph{log canonical}). If $D=0$, we simply say that $X$ has terminal (resp. canonical, log terminal, log canonical) singularities. 

Let $D=\sum \alpha_i D_i$ be a divisor on a variety $X$. We say that the pair $(X,D)$ is $\emph{klt}$ if it is purely log terminal and $\alpha_i<1$ for all $i$.
\end{Def}

\begin{Example}
\begin{itemize}

	\item Consider the pair $(S,C)$, where $S$ is a smooth surface and $C\subset S$ is a smooth curve. Then the pair is canonical at every point of $C$ and is terminal at every other point.

	\item Consider the pair $(\mathds{P}^2, \big| L \big|)$, where $L$ is a line. Then the pair is terminal at every point $P\in \mathds{P}^2$ since a generic line does not pass through $P$.

	\item Let $\mathcal{L}$ be the linear system of lines on $\mathds{P}^2$ passing through a point $P$. Then the pair $(\mathds{P}^2,\mathcal{L})$ is canonical at $P$ and is terminal elsewhere.
\end{itemize}
\end{Example}

\begin{Def}[{\cite[Lemma-Definition~2.6]{PS}}]
We say that a variety $X$ is of \emph{Fano type} if there exists an effective $\mathds{Q}$-divisor $D$ on $X$ such that $-(K_X+D)$ is $\mathds{Q}$-Cartier and ample, and the pair $(X,D)$ is klt.
\end{Def}

\begin{Example}
\begin{enumerate}[(i)]
	\item 
The quotient $Y=X/G$ of a Fano variety by a finite group is a variety of Fano type. Indeed, since $K_X=f^*\big (K_Y+\frac{R}{\mid G \mid}\big )$, where $R$ is the ramification divisor, the divisor $-\big (K_Y+\frac{R}{\mid G \mid}\big )$ is ample. The pair $\big (Y,\frac{R}{\mid G \mid}\big )$ is klt by \cite[Proposition~3.16]{KSP}, hence $Y$ is a variety of Fano type. 
	\item Suppose $(S,D_S)$ is a log del Pezzo surface (that is $-(K_S+D_S)$ is ample) with klt singularities. Naturally it is of Fano type. 
Let $f:X\to S$ be the minimal resolution of singularities of $S$ and let $D$ be the proper transform of $D_S$. Then $X$ is also of Fano type. 
Indeed we can write 
$$K_X+D=f^*(K_S+D_S)+\sum a_i E_i,$$
where $-1< a_i\leqslant 0$, thus $F=-(K_X+D-\sum a_i E_i)$ is an effective divisor which is the pullback of an ample divisor. 
Note that $F$ is not ample because $F\cdot E_i=0$. 
Consider the pair $\big(X,D - \sum (1-\varepsilon_i)a_i E_i\big)$, for some nonnegative rational numbers $\varepsilon_i\ll 1$. Then the pair is klt and  $-K_X+D-\sum (1-\varepsilon_i)a_i E_i$ is ample.
	\item Suppose $X$ is a variety of Fano type. Then there is a divisor $D$ such that $-(K_X+D)$ is ample. Thus $-K_X$ is a sum of an ample and an effective divisor, therefore it is big. Let $\sigma:S\to \mathds{P}^2$ be a blow up of $10$ points in general position then $-K_S$ is not big, therefore $S$ is not of Fano type. Note that if the points are not in a general position then $-K_S$ might still be big \cite{BRS}.
	%\item Varieties of Fano type also appear as exceptional divisors of some extremal contractions.
\end{enumerate}
\end{Example}

\begin{Th} [Inversion~of~adjunction,~{\cite{AdjK}}] \label{ThAdj}
Let $(X,S+B)$ be a log pair such that $S$ is a reduced divisor which has no common component with the support of $B$, let $S^\nu$ denote the normalization of $S$, and let $B^\nu$ denote the different of $B$ on $S^\nu$.
Then $(X, S+B)$ is log canonical near $S$ if and only if $(S^\nu, B^\nu)$ is log canonical.
Let $S\subset X$ be an irreducible divisor and let $D$ be an effective $\mathds{Q}$-Cartier divisor. Assume that $K_X+S$ is $\mathds{Q}$-Cartier and that the pair $(X,S)$ is purely log terminal. Then the pair $(X,S+D)$ is log canonical in a neighborhood of $S$ if and only if the pair $\big(S,\operatorname{Diff}(D)\big)$ is log canonical. 
\end{Th}

\begin{Remark} \label{Adj}
For the definition of the \emph{different} we refer to \cite[Chapter~16]{K}. If $X$ is smooth in codimension $2$, then $\operatorname{Diff}(D) = D\big\vert_S$ by \cite[Corollary~16.7]{K} (case 16.6.3, $m=1$).

We use Inversion of adjunction to study singularities of pairs as follows. 
Let $F$ be a prime normal divisor on a variety $X$ and let $Z \subset F$ be a subvariety. 
Let $D$ be an effective divisor such that $(X,D)$ is not canonical at some $Z\subset F$. 
Suppose $X$ is smooth along $Z$ then $(X,D+F)$ is not log canonical at $Z$ and $X$ is smooth in codimension $2$ in a neighborhood of $Z$.
Thus by Theorem \ref{ThAdj} the pair $(F,D\big\vert_F)$ is not log canonical at $Z$. 
\end{Remark}

\begin{Def}
Let $\chi:V\dasharrow \bar{V}$ be a birational map between varieties admitting Fano fibrations over the projective line. We say it is \emph{fiberpreserving} (with respect to these fibrations) if there is a commutative diagram
\begin{displaymath}
\xymatrix
{ 
	V\ar@{-->}[r]^\chi \ar[d]_\pi & \ar[d]^{\bar{\pi}} \bar{V}\\
	\mathds{P}^1\ar@{=}[r] & \mathds{P}^1,
}
\end{displaymath}
and $\chi$ is an isomorphism on a generic fiber of $\pi$. 
\end{Def}

Consider a $\mathds{P}^1$-bundle $\pi:S\to\mathds{P}^1$. We can blow up a point on a fiber $F$ of $\pi$ and then contract the proper transform of $F$. This is an elementary transformation of $\mathds{P}^1$-bundles and this is the simplest example of a fiberpreserving map.

The following theorem lets us prove that there are no fiberpreserving maps between Fano fibrations.

\begin{Th}[{\cite[Theorem~1.5]{CC}}] \label{ThV}
Let $Z$ be a smooth curve. Suppose that there is a commutative diagram
\begin{displaymath}
\xymatrix
{ 
	V\ar@{-->}[r]^\rho \ar[d]^\pi & \ar[d]_{\bar{\pi}} \bar{V} \\
	Z\ar@{=}[r] & Z
}
\end{displaymath}
such that $\pi$ and $\bar{\pi}$ are flat morphisms, and $\rho$ is a birational map that induces an isomorphism
\begin{align*}
\rho\big\vert_{V\setminus X}:V\setminus X\to \bar{V}\setminus\bar{X}
\end{align*}
where $X$ and $\bar{X}$ are scheme fibers of $\pi$ and $\bar{\pi}$ over a point $O\in Z$, respectively. Suppose that the varieties $V$ and $\bar{V}$ have terminal $\mathds{Q}$-factorial singularities, the divisors $-K_V$ and $-K_{\bar{V}}$ are $\pi$-ample and $\bar{\pi}$-ample respectively, the fibers $X$ and $\bar{X}$ are irreducible, and the variety $X$ has log terminal singularities. Suppose also that the pair $(V,\lambda \mathcal{M})$ is canonical at subvarieties of $X$ for every mobile linear system $\mathcal{M}$ and rational number $\lambda$ such that $K_V+\lambda\mathcal{M}\sim_\mathds{Q} \pi^*(H)$ for some divisor $H$ on $Z$. Then $\rho$ is an isomorphism.
\end{Th}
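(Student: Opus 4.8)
The plan is to argue by contradiction: assuming $\rho$ is not an isomorphism, I will produce a movable linear system on $V$ whose associated pair has controlled $\mathds{Q}$-linear class and fails to be canonical along a subvariety of the central fibre $X$, and then restrict that pair to $X$ to contradict $\alpha(X)\geqslant 1$. As the test system: since $-K_{\bar V}$ is $\bar\pi$-ample and $Z$ is a curve, for a sufficiently ample divisor $A$ on $Z$ and $r\gg 0$ the complete linear system $\bar{\mathcal{M}}=\big|-rK_{\bar V}+\bar\pi^*A\big|$ is very ample, hence base point free and movable, and $K_{\bar V}+\frac{1}{r}\bar{\mathcal{M}}\sim_{\mathds{Q}}\bar\pi^*\!\big(\frac{1}{r}A\big)$ is pulled back from $Z$; since $\bar{\mathcal{M}}$ is base point free and $\bar V$ is terminal, the pair $\big(\bar V,\frac{1}{r}\bar{\mathcal{M}}\big)$ is terminal (on a log resolution the birational transform of a general member equals its total transform, and one reads off the discrepancies directly). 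Put $\mathcal{M}=\rho_*^{-1}\bar{\mathcal{M}}$ on $V$; because $\rho$ is an isomorphism over $Z\setminus\{O\}$ and $\pi^*(O)=X$, $\bar\pi^*(O)=\bar X$, the system $\mathcal{M}$ is movable, $K_V+\frac{1}{r}\mathcal{M}\sim_{\mathds{Q}}\pi^*\!\big(\frac{1}{r}A\big)$, and $\big(V,\frac{1}{r}\mathcal{M}\big)$ is terminal over $Z\setminus\{O\}$. Finally fix a common resolution $p\colon W\to V$, $q\colon W\to\bar V$ of $\rho$ that is an isomorphism over $V\setminus X$ and over $\bar V\setminus\bar X$, so that every $p$-exceptional (resp.\ $q$-exceptional) divisor has centre inside $X$ (resp.\ $\bar X$).

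The key step is to show that $\big(V,\frac{1}{r}\mathcal{M}\big)$ is not canonical. Writing the crepant comparisons
\begin{align*}
K_W+\frac{1}{r}\mathcal{M}_W &\sim_{\mathds{Q}} p^*\!\Big(K_V+\frac{1}{r}\mathcal{M}\Big)+\sum_i a_iE_i\\
&\sim_{\mathds{Q}} q^*\!\Big(K_{\bar V}+\frac{1}{r}\bar{\mathcal{M}}\Big)+\sum_j b_jF_j,
\end{align*}
with $a_i=a(E_i,V,\frac{1}{r}\mathcal{M})$ and $b_j=a(F_j,\bar V,\frac{1}{r}\bar{\mathcal{M}})$, and using that $\pi\circ p=\bar\pi\circ q$ so the two pullback terms coincide, gives $\sum_i a_iE_i\sim_{\mathds{Q}}\sum_j b_jF_j$ on $W$; since $\bar{\mathcal{M}}$ is base point free and every $q$-exceptional $F_j$ has centre of codimension $\geqslant 2$, each $b_j=a(F_j,\bar V)>0$. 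Suppose all $a_i\geqslant 0$. If $\rho$ maps $X$ birationally onto $\bar X$, then the birational transforms of $X$ and $\bar X$ coincide on $W$ and one checks that $p$ and $q$ contract exactly the same prime divisors, so $\rho$ is an isomorphism in codimension one; applying Proposition~2.7 to the ample divisor $-K_{\bar V}+\bar\pi^*A$ on $\bar V$ and its transform $-K_V+\pi^*A$ on $V$ (ample, since $-K_V$ is $\pi$-ample and $A$ is sufficiently ample) shows $\rho$ is an isomorphism, contrary to assumption. If instead $\rho$ does not map $X$ birationally onto $\bar X$, then $X_W:=p_*^{-1}X$ is $q$-exceptional, hence occurs in $\sum_j b_jF_j$ with positive coefficient while it does not occur in $\sum_i a_iE_i$; pushing $\sum_i a_iE_i\sim_{\mathds{Q}}\sum_j b_jF_j$ forward by $p$ then yields $b(X_W)\cdot X\sim_{\mathds{Q}}0$ on $V$ (note that $X_W$ is the unique divisor occurring in $\sum_j b_jF_j$ that $p$ does not contract), which is impossible since $X$ is a non-zero effective divisor. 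Hence $\big(V,\frac{1}{r}\mathcal{M}\big)$ is not canonical, and, being terminal over $Z\setminus\{O\}$, it has a non-canonical centre $Z'\subseteq X$.

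To conclude, let $E$ be a divisor over $Z'$ with $a(E,V,\frac{1}{r}\mathcal{M})<0$. Since $O$ is a Cartier point on $Z$, the fibre $X=\pi^*(O)$ is a Cartier divisor on $V$, so $\operatorname{ord}_E(X)\geqslant 1$ and
\begin{align*}
a\Big(E,V,X+\frac{1}{r}\mathcal{M}\Big) &= a\Big(E,V,\frac{1}{r}\mathcal{M}\Big)-\operatorname{ord}_E(X)\\
& < -1,
\end{align*}
i.e.\ $\big(V,X+\frac{1}{r}\mathcal{M}\big)$ is not log canonical along $Z'$. As $V$ is terminal and $X$ is a reduced Cartier divisor with log terminal (hence klt) singularities, inversion of adjunction shows that $\big(X,\frac{1}{r}\mathcal{M}|_X\big)$ is not log canonical (Theorem~2.10 applies verbatim when $Z'$ is a smooth point of $X$, or an ordinary double point with $\dim V\geqslant 4$, as in the intended application; in general one uses its extension to log canonicity). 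Finally $K_V+\frac{1}{r}\mathcal{M}$ is pulled back from $Z$ and $X|_X\sim_{\mathds{Q}}0$ — it is numerically trivial, and $X$ is a klt Fano variety, hence rationally connected, so $h^1(\mathcal{O}_X)=0$ — whence $K_V|_X\sim_{\mathds{Q}}K_X$ and $\frac{1}{r}\mathcal{M}|_X\sim_{\mathds{Q}}-K_X$. Thus for a general $D\in\mathcal{M}$ the divisor $D_X:=\frac{1}{r}D|_X$ is an effective $\mathds{Q}$-divisor with $D_X\sim_{\mathds{Q}}-K_X$ and $(X,D_X)$ not log canonical; but $\alpha(X)\geqslant 1$ forces $(X,cD_X)$ to be log canonical for every $c<1$, hence, log canonicity being a closed condition in $c$, also for $c=1$ — a contradiction. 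Therefore $\rho$ is an isomorphism.

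I expect the main difficulty to be the key step: manufacturing, out of the mere failure of $\rho$ to be an isomorphism, a genuinely non-canonical movable pair on $V$ whose non-canonical centre lies inside the bad fibre $X$. This is the relative analogue of the Noether-Fano inequality (compare the proof of Proposition~2.8), and it rests on careful bookkeeping with the negativity lemma, $\mathds{Q}$-factoriality, and the fact that $p$ and $q$ contract divisors only over $O$; once it is in hand, the adjunction step and the use of $\alpha(X)\geqslant 1$ are routine.
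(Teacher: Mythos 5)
There is a genuine gap, and it sits exactly at the step you yourself single out as the key one. You set $\mathcal{M}=\rho^{-1}_*\bar{\mathcal{M}}$ and assert that $K_V+\frac{1}{r}\mathcal{M}\sim_{\mathds{Q}}\pi^*\bigl(\frac{1}{r}A\bigr)$, so that the two pullback terms on $W$ cancel and $\sum a_iE_i\sim_{\mathds{Q}}\sum b_jF_j$. This is false precisely in the case where you need it. Since $\rho$ is only an isomorphism away from the fibres over $O$, the class of $\mathcal{M}$ is determined only up to a multiple of $X$: writing $X_W$ for the proper transform of $X$ on $W$, one computes $p_*q^*K_{\bar V}=K_V-a(X_W,\bar V)\,X$ whenever $X_W$ is $q$-exceptional, hence $\mathcal{M}\sim -rK_V+\pi^*A+r\,a(X_W,\bar V)\,X$ and $K_V+\frac{1}{r}\mathcal{M}\sim_{\mathds{Q}}\pi^*\bigl(\frac{1}{r}A\bigr)+a(X_W,\bar V)\,X$ with $a(X_W,\bar V)>0$ because $\bar V$ is terminal. (An elementary transformation of a ruled surface over $\mathds{P}^1$ already exhibits this shift.) Your case (a) is unaffected, and in fact needs no discrepancy hypothesis at all: if $\rho_*X=\bar X$ then $X_W=\bar X_W$ is exceptional for neither projection, so $\rho$ is an isomorphism in codimension one and Proposition 2.7 applies. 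But in case (b) the correct identity is $a(X_W,\bar V)\,p^*X+\sum a_iE_i\sim_{\mathds{Q}}\sum b_jF_j$, and pushing it forward by $p$ gives $a(X_W,\bar V)\,X\sim_{\mathds{Q}} b_{X_W}X$, which is a tautology (indeed $b_{X_W}=a(X_W,\bar V)$ since $\bar{\mathcal{M}}$ is free), not the contradiction ``$b(X_W)\cdot X\sim_{\mathds{Q}}0$'' you claim. So as written the argument does not show that $(V,\frac{1}{r}\mathcal{M})$ is non-canonical, and everything downstream (the adjunction step and the use of $\alpha(X)\geqslant1$, which are fine) has no input.

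The strategy is salvageable, but it needs the bookkeeping you skipped: with the corrected identity, set $B=\sum b_jF_j-\sum a_iE_i-a(X_W,\bar V)p^*X\sim_{\mathds{Q}}0$; the negativity lemma applied to $p$ (and $q$) forces $B=0$, and then the coefficient of the proper transform $\bar X_W$ (which is $p$-exceptional in case (b) and occurs in $p^*X$ with multiplicity $\geqslant1$) yields $a(\bar X_W,V,\frac{1}{r}\mathcal{M})\leqslant -a(X_W,\bar V)<0$, i.e.\ the non-canonical divisor is found explicitly, with centre inside $X$. Note that the paper avoids this issue altogether by a different device: it mixes the free systems from both sides, $M_V=\frac{2\varepsilon}{n}\Lambda+\frac{1-\varepsilon}{n}\bar\Gamma$ and $M_{\bar V}=\frac{2\varepsilon}{n}\bar\Lambda+\frac{1-\varepsilon}{n}\Gamma$, so that $K_V+M_V$ and $K_{\bar V}+M_{\bar V}$ are both \emph{ample}; then ``both pairs canonical'' gives isomorphism in codimension one by the negativity lemma with no fibre-class corrections to track, and non-canonicity of $(V,M_V)$ follows at once. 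Either repair is acceptable, but your text as it stands proves nothing in case (b). (A minor further point: a common resolution cannot in general be chosen to be an isomorphism over $V\setminus X$ if $V$ is singular there; this is harmless, since exceptional divisors with centre not in $X$ are exceptional for both projections with equal discrepancies, but it should be said.)
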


The proof of Theorem \ref{ThV} repeats the proof of \cite[Theorem~1.5]{CC} except that we do not need Inversion of adjunction. Essentially Cheltsov has proven Theorem \ref{ThV} and then applied Inversion of adjunction to get \cite[Theorem~1.5]{CC}.

Let $X$ and $Y$ be varieties admitting a Mori fiber space. Let $\chi:X\dasharrow Y$ be a birational map which is not an isomorphism. Then, under some conditions on $K_X$, there is a linear system $\mathcal{M}$ on $X$ such that if it is scaled to $-K_X$ with the number $\lambda$, the pair $(X,\lambda \mathcal{M})$ is not canonical. These kind of statements are called Noether-Fano inequalities, they originate from the study of Cremona group by Noether and from the works of Fano. For examples of such statements see the proofs of \cite[Theorem~1.4.1]{Cheltsov-Fano} and \cite[Proposition~2.1]{Pukh123} or the theorem above.

I formulate this condition in the following way.

\begin{Prop}[Noether-Fano inequality] \label{NF}
Let $\pi: X\to \mathds{P}^1$ be a Fano fibration such that $-K_X$ is not big. Suppose that there is a birational map $\chi:X\dasharrow Y$ which is not an isomorphism and suppose $Y$ admits a Mori fiber space. Then there is a mobile linear system $\mathcal{M}$ on $X$ and numbers $s, \lambda \in \mathds{Q}_{\geqslant 0}$ such that $K_X+\lambda\mathcal{M}\sim_\mathds{Q} sF$, where $F$ is a fiber of $\pi$, and the pair $(X,\lambda\mathcal{M})$ is not canonical.
\end{Prop}
\begin{proof}
Let $\mathcal{M}_Y$ be a base point free linear system on $Y$ and let $\mathcal{M}$ be its proper transform on $X$. Let 
%$W$, $\varphi: W\to X$, and $\psi:W\to Y$ be a resolution of the birational map $\chi$. 
\begin{displaymath}
\xymatrix{
  & W \ar[dl]_{\varphi} \ar[dr]^{\psi} & \\
X \ar@{--}[r] &    \text{--}^\chi   \ar@{-->}[r] & Y %not long enough
}
\end{displaymath}
be a resolution of the birational map $\chi$. 
Let $\mathcal{M}_W=\psi^{*}(\mathcal{M}_Y)$ then for any $\lambda\in\mathds{Q}$
\begin{align}
\varphi^*(K_X+\lambda \mathcal{M}) + \sum a_i E_i \sim_\mathds{Q}  K_W + \lambda\mathcal{M}_W 
\sim_\mathds{Q} \psi^*(K_Y+\lambda \mathcal{M}_Y) + \sum b_j E_j,
\end{align}
where $E_i$ and $E_j$ are the exceptional divisors of $\varphi$ and $\psi$ respectively and $a_i$ and $b_j$ are the discrepancies.
Note that $Y$ admits a Mori fiber space, therefore $Y$ is terminal, in particular $b_j>0$.

Suppose there exists a Mori fiber space $\pi_Y:Y\to Z$ with $\operatorname{dim} Z>0$. 
Let $H$ be a very ample divisor on $Z$ and let $\mathcal{M}_Y=\big| \pi_Y^*(H) \big|$. 
Then there are rational numbers $n\geqslant 0$ and $l$ such that $\mathcal{M}\subset \big| -nK_X +lF \big|$. 

If $n=0$, then $Z\cong\mathds{P}^1$ and $\chi$ is fiberpreserving. 
Hence by Theorem \ref{ThV} there exists a mobile linear system $\mathcal{L}$ such that $K_X + \lambda\mathcal{L} \sim_\mathds{Q} \pi^*(\Gamma)$, where $\Gamma$ is a divisor on $Z$ and $(X,\lambda\mathcal{L})$ is not canonical. 
Clearly, $\pi^*(\Gamma)\sim_{\mathds{Q}}sF$ for some $s\in \mathds{Q}$. We claim that $s\geqslant 0$. 
Indeed, $-K_X$ is not big, hence not in the interior of the closure of the cone of mobile divisors. 
On the other hand it is
\begin{align*}
-K_X \sim_{\mathds{Q}} \lambda \mathcal{L} - s F.
\end{align*}
The linear systems $\mathcal{L}$ and $F$ are mobile and for $D \in \mathcal{L}$ we have $D\not\sim_\mathds{Q} kF$ for any $k\in \mathds{Q}$. 
Hence if $s<0$, then $\lambda\mathcal{L}-sF$ is in the interior of the closure of the cone of mobile divisors, contradiction. 
Thus $\mathcal{L}$, $\lambda$, and $s$ are the required linear system and rational numbers.

Now suppose $n>0$. 
Set $\lambda=\frac{1}{n}$ then $K_X+\lambda \mathcal{M}\sim_\mathds{Q} \frac{l}{n}F$, and $l\geqslant0$ since $-K_X$ is not big. 
Let $C$ be a curve in a general fiber of $\pi_Y$, then $C\cdot D=0$ for $D\in \mathcal{M}_Y$. 
We have $C\cdot K_Y<0$ by definition of a Mori fiber space and $\psi^{-1}(C)\cdot E_j=0$ for all $\psi$-exceptional divisors $E_j$ since $C$ is contained in a general fiber.
Let us intersect (1) with $\psi^{-1}(C)$, then
\begin{align*}
\frac{l}{n} \psi^{-1}(C)\cdot \varphi^* (F) + \sum a_i \psi^{-1}(C) \cdot E_i  = C\cdot K_Y <0.
\end{align*}
The cycle $\psi^{-1}(C)$ is numerically effective since $C$ is numerically effective.
Therefore we have $\psi^{-1}(C) \cdot \varphi^*(F)\geqslant 0$ and $\psi^{-1}(C)\cdot E_i\geqslant 0$. 
Thus $a_i<0$ for some $i$.

Suppose $\operatorname{dim}Z=0$, that is $Y$ is a Fano variety with $\operatorname{Pic} Y\cong \mathds{Z}$.
Let $\mathcal{M}_Y$ be a very ample linear system on $Y$. 
Let $\lambda$ and $s$ be rational numbers such that $K_X+\lambda\mathcal{M}\sim_\mathds{Q} sF$, where $F$ is a fiber of $\pi$.
Such numbers exists since otherwise we have $\mathcal{M} \subset l F$ which implies that $Z \cong \MP^1$.
We also have $s \geqslant 0$ because $-K_X$ is not big. 
Thus we may rewrite (1) using $K_X+\lambda\mathcal{M} \sim_\mathds{Q} sF$
\begin{align*}
\psi^*(K_Y+\lambda\mathcal{M}_Y)\sim_\mathds{Q}\sum (a_j-b_j)E_j+\sum a_i E_i +\varphi^*(sF),
\end{align*}
where the first sum is over all $\psi$-exceptional divisors, the second sum is over all $\varphi$-exceptional divisors which are not $\psi$-exceptional, and we set $a_j=0$ if $E_j$ is not $\varphi$-exceptional. 
Note that the coefficients in the first sum are $(a_j-b_j)$ since some $E_j$ are exceptional for both $\psi$ and $\varphi$.
Suppose $(X,\lambda\mathcal{M})$ is canonical, then $a_i\geqslant 0$ for all $i$ and therefore $\sum a_i E_i + \varphi^*(sF)$ is effective. 
Thus we may apply negativity lemma \cite[Lemma~2.19]{K} to conclude that $a_j\geqslant b_j > 0$.
This means that all $\psi$-exceptional divisors are also $\varphi$-exceptional.
Suppose $\varphi$ has $K$ exceptional divisors and $\psi$ has $L$ exceptional divisors, then we have shown that $K\geqslant L$.
We compute $\operatorname{rk}\operatorname{Pic} W$ in two different ways to arrive to a contradiction
\begin{align*}
2+L\leqslant 2+K=\operatorname{rk}\operatorname{Pic} X + K = \operatorname{rk}\operatorname{Pic} W = \operatorname{rk}\operatorname{Pic} Y + L = 1 + L.
\end{align*}
\end{proof}

\begin{Remark}
We can see from the proof that Proposition \ref{NF} holds under weaker assumptions. 
We say that $X$ satisfies $K$\emph{-condition} if $-K_X$ is not in the interior of the closure of the cone of mobile divisors. 
If $-K_X$ is not big, that is, $-K_X$ is not in the interior of the closure of the cone of effective divisors, then $X$ satisfies $K$-condition.
The converse does not hold.
The $K$-condition typically appears when one studies birational rigidity of Fano fibrations.
In Proposition \ref{NF} we may replace the requirement for $-K_X$ to not be big with the $K$-condition.
\end{Remark}

%%%%%%%%%%%%%%%%%%%%%%%%%%%%%%%%%%%%%%%%%%%%%%%%%%%%%%%%%%%
%%%%%%%%%%%%%%%%%%%%%%%%%%%%%%%%%%%%%%%%%%%%%%%%%%%%%%%%%%%
%%%%%%%%%%%%%%%%%    Dimension 4     %%%%%%%%%%%%%%%%%%%%%%
%%%%%%%%%%%%%%%%%%%%%%%%%%%%%%%%%%%%%%%%%%%%%%%%%%%%%%%%%%%
%%%%%%%%%%%%%%%%%%%%%%%%%%%%%%%%%%%%%%%%%%%%%%%%%%%%%%%%%%%

\section {Example in dimension $\geqslant4$}
In this section we construct examples in dimension $\geqslant4$ and prove Theorem \ref{TechTh}, (i).

Let $W$ be a generic smooth divisor of bi-degree $(2M,2l)$ on $\mathds{P}^M\times \mathds{P}^1$, $M\geqslant3$, $l\geqslant3$. Let $\sigma:V\to \mathds{P}^M\times \mathds{P}^1$ be the double cover branched over $W$. The variety $V$ is smooth because $W$ is smooth and $\operatorname{Pic}(V)=\mathds{Z}\oplus\mathds{Z}$ by Lefschetz hyperplane section theorem. Let $\pi:V\to\mathds{P}^1$ be the composition of $g$ and the projection onto $\mathds{P}^1$. The restriction of $\sigma$ to any fiber $F$ of $\pi$ is the double cover of $\mathds{P}^M$ branched over a hypersurface of degree $2M$, therefore $F$ is a Fano variety. Thus $\pi:V\to \mathds{P}^1$ is a Mori fiber space.

The superrigidity of the variety $V$ has been considered in \cite[Theorem~1]{Pukh04}. It was shown that if $V$ satisfies the generality conditions from \cite{Pukh04}, then for every variety $Y$ admitting a Mori fiber space $\bar{\pi}:Y\to Z$ and birational to $V$ and every birational map $\chi: V\dasharrow Y$, we must have $Z=\mathds{P}^1$ and $\chi$ is fiberpreserving with respect to $\pi$ and $\bar{\pi}$. But we need more than this as we do not know how do fiberpreserving maps affect the canonical class.

We now describe some sufficient conditions on the branching divisor (modification of those in \cite[p.~22-23]{Pukh_Dir_Prod}) which let us control the singularities of linear systems on double covers of $\mathds{P}^M$. Then we show that the locus of hypersurfaces of degree $2M$ not satisfying these conditions is of codimension at least $2$. As a generic curve in the space of hypersurfaces of degree $2M$ does not pass through this locus, every fiber of $\pi$ satisfies these conditions.

Let $W_X\subset\mathds{P}^M$ be a hypersurface of degree $2M$, $M\geqslant3$. For a nonsingular point $x\in W_X$ fix a system of affine coordinates $z_1,\dots,z_M$ on $\mathds{P}^M$ with the origin at $x$ and set
\begin{align*}
q_1+q_2+\dots+q_{2M}=0
\end{align*}
to be the equation of the hypersurface $W_X$ in these affine coordinates, where $q_i=q_i(z)$ are homogeneous polynomials of degree $\deg q_i=i$. We can assume that $q_1=z_1$ since $W_X$ is smooth at $x$. Denote 

\begin{align*}
\bar{q_i}=\bar{q_i}(z_2,\dots,z_M)=q_i\big\vert_{\{z_1=0\}}=q_i(0,z_2,\dots ,z_M).
\end{align*}

\begin{Cond} \label{Regularity}
If $M\geqslant 5$, then $W_X$ satisfies the condition at a smooth point $x$ if the rank of the quadratic form $\bar{q_2}$ is at least $2$.

For $M=4$ we need either
\begin{enumerate}[(i)]
	\item The rank of the quadratic form form $\bar{q_2}$ is at least $2$
	\item or the rank of $\bar{q_2}$ is $1$ and the following additional condition is satisfied. Without loss of generality we assume that $\bar{q_2}=z_2^2$.
We require that one of the polynomials $\bar{q}_3(0,z_3,0)=q_3(0,0,z_3,0)$ or $\bar{q}_4(0,z_3,0)$ is not zero.
\end{enumerate}

Suppose $M=3$. Then we require either
\begin{enumerate}[(i)]
	\item The rank of the quadratic form form $\bar{q_2}$ is at least $2$,

	\item the rank of $\bar{q_2}$ is $1$ and the following additional condition is satisfied. Without loss of generality we assume that $\bar{q_2}=z_2^2$.
We require that at least one of the polynomials $\bar{q}_3(0,z_3)$, $\bar{q}_4(0,z_3)$, or $\bar{q}_5(0,z_3)$
is not zero,

	\item or the rank of $\bar{q_2}$ is $0$ and the polynomial $\bar{q_3}(1,t)$ has $3$ distinct roots.
\end{enumerate}

The variety $W_X$ satisfies the condition if it satisfies condition at every nonsingular point and $\operatorname{Sing}(W_X)$ is empty or is a unique ordinary double point.

Let $\sigma:X\to\mathds{P}^M$ be the double cover branched over a hypersurface $W_X$.
The variety $X$ satisfies the condition if $W_X$ satisfies the condition.
Note that $X$ is smooth or has a unique ordinary double point, which is the preimage of the ordinary double point on the branching divisor.
\end{Cond}

\begin{Lemma} \label{LemLef}
Let $\sigma:X\to\mathds{P}^M$ be the double cover branched over a hypersurface of degree $2M$.
Suppose $X$ satisfies Condition \ref{Regularity}, then
\begin{align*}
\operatorname{Cl}(X)=\operatorname{Pic}(X)= H\mathds{Z},
\end{align*}
where $H\in \big| -K_X \big|$.
\end{Lemma}
\begin{proof}
First, note that by Hurwitz formula $H$ is a pullback of a hyperplane on $\mathds{P}^M$.
If $X$ is smooth, then the statement of the lemma follows from Lefschetz hyperplane section theorem. 
Suppose $X$ has a unique ordinary double point and suppose $\operatorname{dim}X=3$, then the same is due to \cite{Clemens} (or we could use \cite[Theorem~4.1]{Rams}).
If $\operatorname{dim}X>3$, then \cite[Lemma~28]{Ch08} implies the statement of the lemma.
\end{proof}

The following theorem is a stronger version of \cite[Theorem~4]{Pukh_Sing_Fano}. 

\begin{Th} \label{FiberRig}
Let $\sigma:X\to\mathds{P}^M$ be the double cover branched over a hypersurface of degree $2M$.
Suppose $X$ satisfies Condition \ref{Regularity}.
Then for every effective divisor $D\in \big| -nK_X \big|$ the pair $\big(X,\frac{1}{n}D\big)$ is canonical for any positive $n\in \mathds{Z}$.
\end{Th}
\begin{proof}
Suppose $D$ is a reducible divisor, that is $D=D_1+D_2$. 
By Lemma \ref{LemLef} we may assume $D_i\in \big| -n_iK_X \big|$ for some positive $n_i\in \mathds{Z}$, $i=1,2$.
Clearly $n_1+n_2=n$.
Suppose the theorem holds for $\big(X,\frac{1}{n_i}D_i\big)$, $i=1,2$.
Note that if the pairs $(X,F)$ and $(X,F^\prime)$ are canonical then so is the pair 
\begin{align*}
\big(X,\alpha F +(1-\alpha) F^\prime \big)
\end{align*}
for $0\leqslant \alpha\leqslant 1$.
Set $F=\frac{1}{n_1}D_1$, $F^\prime=\frac{1}{n_2}D_2$, and $\alpha=\frac{n_1}{n}$, then we see that 
\begin{align*}
\alpha F +(1-\alpha) F^\prime=\frac{1}{n}D_1+\frac{1}{n}D_2=\frac{1}{n}D.
\end{align*}
Hence the theorem holds for the pair $\big(X,\frac{1}{n}D\big)$. Thus it is enough to prove the theorem for irreducible divisors $D$.

It follows from \cite[Proof~of~Theorem~4,~page~11]{Pukh_Sing_Fano} that $\big(X,\frac{1}{n}D\big)$ is canonical at the ordinary double point. 
By \cite[p.~23-24]{Pukh_Dir_Prod} the pair $\big(X,\frac{1}{n}D\big)$ may only be singular at the nonsingular points on the ramification divisor.
Let $x \in X \setminus \Sing(X)$ be a point on the ramification divisor.
Let $W_X$ be the branching divisor of the double cover $\sigma: X\to \mathds{P}^M$. 
Let $A$ be the hyperplane in $\mathds{P}^M$ tangent to $W_X$ at the point $\sigma(x)\in W_X$.
Denote $\sigma^{-1}(A)$ as $H$. 
It was shown in \cite[Proof~of~Lemma~5,~p.~29-30]{Pukh_Dir_Prod} that the pair $\big(X,\frac{1}{n}D\big)$ is canonical at $x$, unless $D=H$. In this case $n=1$, since by Hurwitz formula $H\in \big| -K_X \big|$.

We have not used the Condition \ref{Regularity} yet. If $\operatorname{rank}(\bar{q}_2)\geqslant 2$, then by \cite[p.~29-30]{Pukh_Dir_Prod} the pair $(X,H)$ is canonical at $x$. This proves the theorem for $M\geqslant 5$. 

Suppose $M=4$ and $\operatorname{rank}\bar{q}_2=1$, then the local equation of $H$ at $x$ is
\begin{align*}
y^2=z_2^2+c_1z_3^3+c_2z_3^4+z_4(\dots),
\end{align*}
As either $c_1$ or $c_2$ is not zero, $x\in H$ is a singularity of the type $cA_3$ at worst. Thus $H$ has terminal singularities and $(X,H)$ is canonical.

Now suppose $M=3$ and $\bar{q}_2=z_2^2$. It follows from the Condition \ref{Regularity} that the local equation of $H$ is $y^2=z_2^2+z_3^3$, $y^2=z_2^2+z_3^4$, $y^2=z_2^2+z_3^5$, or $y^2=z_2(z_2^2+z_3^2)$ thus $x$ is a Du Val singularity, $H$ has canonical singularities, and therefore the pair $(X,H)$ is canonical.
\end{proof}

It was proven in \cite[Proposition~5]{Pukh_Dir_Prod}, that a generic hypersurface $W_X$ of degree $2M$ satisfies Condition \ref{Regularity}.
Thus a generic fiber of $\pi$ satisfies Condition \ref{Regularity} for a generic $W$. We want every fiber of $\pi$ to satisfy Condition \ref{Regularity}.

\begin{Prop} \label{PropGen}
Let $\mathcal{W}$ be the space of all hypersurfaces of degree $2M$ in $\mathds{P}^M$. Denote the space of all hypersurfaces satisfying Condition \ref{Regularity} by $\mathcal{W}_{reg}\subset\mathcal{W}$. If $M\geqslant3$ then the codimension of $\mathcal{W}\backslash\mathcal{W}_{reg}$ is $2$. 
\end{Prop}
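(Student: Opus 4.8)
The plan is a dimension count with incidence varieties, after splitting the bad locus according to the type of degeneracy. Write $N=\dim\mathcal W$. A hypersurface $W$ of degree $2M$ fails to be regular exactly when either (a) it has a singular point which is not an ordinary double point, or (b) the regularity conditions fail at some nonsingular point of $W$. Hence $\mathcal W\setminus\mathcal W_{reg}=\mathcal A\cup\mathcal B$, and, since the codimension of a union is the minimum of the codimensions, it suffices to prove $\operatorname{codim}\mathcal A\geq2$ and $\operatorname{codim}\mathcal B\geq2$ separately. In both cases I would bound the dimension of an incidence variety lying over an auxiliary parameter space and mapping onto the relevant part of the bad locus; the point that makes all the estimates work is that, for a fixed point $x$, the conditions imposed on $W$ read off disjoint blocks of the Taylor expansion of $W$ at $x$, so the corresponding evaluation maps are jointly surjective and the expected codimensions simply add up.

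For $\mathcal A$: a point $x\in W$ is a singular point which is not an ordinary double point precisely when $\nabla W(x)=0$ and the Hessian of the local equation of $W$ at $x$ has rank $\leq M-1$. Over a fixed $x$ the condition $\nabla W(x)=0$ is linear of codimension $M+1$ in $\mathcal W$, and on that subspace $W\mapsto\operatorname{Hess}_xW$ is a surjective linear map onto quadratic forms in $M$ variables, so $\det\operatorname{Hess}_xW=0$ costs one further codimension. Thus the incidence variety $\{(x,W)\}\subset\MP^M\times\mathcal W$ has dimension $M+(N-(M+2))=N-2$, and its image $\mathcal A$ therefore has codimension $\geq2$.

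For $\mathcal B$: failure at a nonsingular point $x$ is, in the notation of the statement, a condition on the jet $(\bar q_2,\bar q_3,\dots)$ whose form depends on $M$. If $M\geq5$ it is simply $\operatorname{rk}\bar q_2\leq1$; the rank-$\leq1$ quadratic forms in $M-1$ variables form an affine cone over a Veronese of dimension $M-1$, so the fibre over a fixed $x$ has codimension $1+\binom{M}{2}-(M-1)$ in $\mathcal W$, which, letting $x$ vary over $\MP^M$, gives $\operatorname{codim}\mathcal B\geq\tfrac{(M-1)(M-4)}{2}\geq2$. If $M=4$ or $M=3$, the rank condition alone is too weak, so I would treat separately the loci where $\operatorname{rk}\bar q_2=1$ and where $\bar q_2=0$. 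On $\{\operatorname{rk}\bar q_2=1\}$, let $\Lambda=\operatorname{rad}\bar q_2\subset T_xW$, of dimension $M-2$; the crucial move is to read condition (ii) coordinate-freely, namely as the requirement that neither $\bar q_3$ nor $\bar q_4$ vanishes identically on $\Lambda$ (for $M=3$: that not all of $\bar q_3,\bar q_4,\bar q_5$ vanish on the line $\Lambda$). Hence failure means $\bar q_3|_\Lambda\equiv0$ or $\bar q_4|_\Lambda\equiv0$ (resp. $\bar q_3|_\Lambda=\bar q_4|_\Lambda=\bar q_5|_\Lambda=0$), and running the bookkeeping over the parameter space of pairs $(x,\Lambda)$ one finds the incidence variety has dimension $\leq N-4$ for $M=4$ and $\leq N-2$ for $M=3$. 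On $\{\bar q_2=0\}$: for $M=4$ this locus already has codimension $\geq3$, while for $M=3$ failure additionally requires condition (iii) to fail, i.e. the binary cubic $\bar q_3$ to have a repeated root, one further codimension, so the total is again $\geq2$.

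The main obstacle is exactly the low-dimensional cases $M=3,4$: there the crude estimate coming from the rank of $\bar q_2$ yields only codimension $0$ (for $M=4$) or even a negative number (for $M=3$) once $x$ is allowed to move, so one is forced to use the finer conditions (ii) and (iii) — and, the real subtlety, to state them intrinsically (vanishing of the higher $\bar q_i$ on $\operatorname{rad}\bar q_2$; vanishing of the discriminant of $\bar q_3$), because the normalisation used in the statement, which makes $\bar q_2$ a square of a coordinate, consumes residual coordinate symmetry and a naive count against the normalised coefficients would lose a codimension. The remaining checks — independence of the imposed conditions and surjectivity of the incidence varieties onto the strata — are routine given the disjoint-Taylor-block observation of the first paragraph.
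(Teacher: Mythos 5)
Your skeleton is the paper's own proof: an incidence variety over $\MP^M$, the reduction to showing that over a fixed point the bad hypersurfaces have codimension $\geqslant M+2$ in $\mathcal W$ (equivalently $M+1$ among hypersurfaces through the point), and the stratification by $M$ and by $\operatorname{rank}\bar q_2$. Your counts for the worse-than-node locus $\mathcal A$, for $M\geqslant 5$, and for both strata when $M=3$ (where the radical of $\bar q_2$ is a line, so your ``intrinsic'' reading literally coincides with the stated condition) agree with the paper's.

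The $M=4$, rank-one stratum is where there is a genuine gap. The proposition is about $\mathcal W_{reg}$ as defined by the stated conditions, and the way condition (ii) for $M=4$ is actually used in the paper --- in Theorem 3.4 (``as $c_1$ or $c_2$ is not zero\dots'') and in this very proof (``conditions $\bar q_3(0,z_3,0)=0$ and $\bar q_4(0,z_3,0)=0$ add $2$ to the codimension'') --- is that failure on this stratum means the two coefficients of $z_3^3$ and $z_3^4$ \emph{both} vanish. Your substitute failure locus $\{\bar q_3\vert_\Lambda\equiv 0\ \text{or}\ \bar q_4\vert_\Lambda\equiv 0\}$ does not contain that set: take $\bar q_2=z_2^2$, $\bar q_3=z_3^2z_4$, $\bar q_4=z_4^4$, so both coefficients vanish while neither restriction to the $2$-plane $\Lambda$ is identically zero. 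Hence your codimension-$4$ estimate bounds a different set and does not prove the statement for $M=4$, unless you first prove that the actual failure locus is contained in yours; that containment holds only under an existential reading of the choice of $z_3$ inside the radical, and then it requires an argument (for instance, that the zero sets of a cubic and a quartic form on $\MP(\Lambda)\cong\MP^1$ cannot cover $\MP^1$ unless one of the forms vanishes identically), not the assertion that this is ``the'' coordinate-free meaning. Moreover, the stated motivation for the detour is mistaken: the naive count does not lose a codimension. Rank $\leqslant 1$ costs $c(4)=3$ and the two coefficient equations cost $2$ more, i.e.\ exactly $M+1=5$ over a fixed point, which is precisely what your own reduction requires --- and is what the paper does. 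The only real trap is the ``and/or'' in the wording of (ii): if regularity demanded that both coefficients be nonzero, its negation would be a single extra condition, the fixed-point count would drop to $4$, and the same incidence computation shows the corresponding bad locus has codimension only one; so (ii) must be read, as the paper's proof reads it, as ``$c_1$ and $c_2$ do not both vanish''. The cure is that reading, not a change of locus.
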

\begin{proof}
It is classically known that the codimension of the locus of hypersurfaces with $2$ or more ordinary double points or with worse singularities is $2$. Thus it is enough to prove that the space of hypersurfaces which do not satisfy Condition \ref{Regularity} at nonsingular points has codimension $\geqslant2$.

Clearly, $\mathcal{W}=H^0\big(\mathds{P}^M,\mathcal{O}(2M)\big)$. Let $V=\mathds{P}^M\times \mathcal{W}$ and let $I$ be the incidence hypersurface in it
\begin{align*}
I=\{(x,Q)\in V \mid Q(x)=0\}.
\end{align*}
Let $p$ and $q$ be the natural projections $p:I\to\mathds{P}^M$ and $q:I\to \mathcal{W}$. Let $Y$ be the subset of ``bad'' pairs, that is
\begin{align*}
Y=\{(x,F)\in I \mid &F~\text{is smooth at}~x~\text{and}~F~\text{does not satisfy Condition \ref{Regularity} at}~x \}.
\end{align*}
To prove the proposition it is enough to show that $\operatorname{codim} q(Y)\geqslant2$. 
To show this it is sufficient to prove that for any $x \in \MP^M$.
$$\operatorname{codim}_{p^{-1}(x)}p^{-1}(x)\cap Y = \operatorname{codim}_I Y \geqslant M+1$$
since the dimension of a fiber of $q$ is $M-1$.

Consider the equation of $F$ in affine coordinates in the neighborhood of a point $x$
\begin{align*}
Q_x=q_1+q_2+\dots+q_{2M}=0,
\end{align*}
where $q_i$ is a homogeneous polynomial of degree $i$. The hypersurface $F$ is smooth at $x$ if and only if $q_1\neq 0$. Thus we may assume that $q_1=z_1$ and take $\bar{q_2}=q_2(0,z_2,\dots,z_M)$. The set of quadratic forms of rank $\leqslant1$ in the variables $z_2,\dots,z_M$ is of codimension 
\begin{align*}
c(M)=\frac{(M-1)(M-2)}{2}.
\end{align*}
When $M\geqslant5$ we have $c(M)\geqslant M+1$. Suppose $M=4$. As the conditions $\bar{q_3}(0,z_3,0)=0$ and $\bar{q_4}(0,z_3,0)=0$ add $2$ to the codimension, the codimension is $c(4)+2=5$. Similarly if $M=3$ the variety of hypersurfaces with $\operatorname{rank}(\bar{q_2})=1$ satisfying the conditions: $\bar{q_3}(0,z_3)=0$, $\bar{q}_4(0,z_3)=0$ and $\bar{q}_5(0,z_3)=0$, is of codimension $4$. On the other hand, the variety of hypersurfaces with $\bar{q_2}=0$ is of codimension $3$ and the condition on $\bar{q_3}(1,t)=0$ having a multiple root, adds $1$ to the codimension.
\end{proof}

\begin{Lemma} \label{KBig4}
Suppose $V$ is the variety from Theorem \ref{TechTh}, $(i)$, then $-K_V$ is not big.
\end{Lemma}
\begin{proof}
It is easy to compute that 
\begin{align*}
-K_V=g^*(L),
\end{align*}
where $L$ is a divisor of bi-degree $(1,2-l)$, $l\geqslant3$. Thus $-K_V$ is not big.
\end{proof}

\begin{proof}[Proof~of~Theorem~1.1 (i)]
Let $\pi:V\to \mathds{P}^1$ be the composition of the double cover and the projection onto $\mathds{P}^1$. There is a map $\bar{\pi}: \mathds{P}^1\to \mathcal{W}$ corresponding to the fibration $\pi$: $\bar{\pi}$ maps $t\in \mathds{P}$ to the branching divisor of the double cover $F_t \to \mathds{P}^M$, where $F_t$ is the fiber over $t$. The image $\bar{\pi}(\mathds{P}^1)$ is a curve of degree $2l$ since $W$ is a divisor of bi-degree $(2M,2l)$. By Proposition \ref{PropGen} the codimension of the set of hypersurfaces which do not satisfy Condition \ref{Regularity} is $2$, therefore a generic rational curve of degree $2l$ does not intersect this set. Thus for a generic divisor $W$ every fiber of $\pi$ satisfies Condition \ref{Regularity}.

Let $\chi$ be a birational map to a variety admitting a Mori fiber space. Suppose $\chi$ is not an isomorphism. Proposition 2.8 is applicable since $-K_V$ is not big by Lemma \ref{KBig4}. 
Thus there is a linear system $\mathcal{M}$ and rational numbers $\lambda$, $s$ such that $K_V+\lambda\mathcal{M}\sim_\mathds{Q} sF$ and $(V,\lambda\mathcal{M})$ is not canonical. Suppose the pair is not canonical at $Z$. Suppose $Z$ is in a fiber $F$ of $\pi$. Then $(F,\lambda\mathcal{M}\big\vert_F)$ is not log canonical at $Z$ by Remark \ref{Adj}. Since $\lambda\mathcal{M}\big\vert_F \sim_\mathds{Q} -K_F$ it contradicts Theorem \ref{FiberRig}. Thus $Z$ is not in any fiber of $\pi$. Consider a generic fiber $F$ of $\pi$, then $(F,\lambda \mathcal{M}\big\vert_F)$ is not canonical at $Z\cap F$. This also contradicts Theorem \ref{FiberRig}, thus $\chi$ is an isomorphism. 
\end{proof}

%%%%%%%%%%%%%%%%%%%%%%%%%%%%%%%%%%%%%%%%%%%%%%%%%%%%%%%%%%%
%%%%%%%%%%%%%%%%%%%%%%%%%%%%%%%%%%%%%%%%%%%%%%%%%%%%%%%%%%%
%%%%%%%%%%%%%%%%%    Dimension 3     %%%%%%%%%%%%%%%%%%%%%%
%%%%%%%%%%%%%%%%%%%%%%%%%%%%%%%%%%%%%%%%%%%%%%%%%%%%%%%%%%%
%%%%%%%%%%%%%%%%%%%%%%%%%%%%%%%%%%%%%%%%%%%%%%%%%%%%%%%%%%%

\section {Example in dimension 3}
In this section we construct an example in dimension $3$ and prove Theorem \ref{TechTh}, $(ii)$.

Let $Y=(\mathds{C}^6\setminus Z\big(\langle u,v\rangle\cap\langle x,y,z,w\rangle\big)/(\mathds{C}^*)^2$, where the $(\mathds{C}^*)^2$-action is given by the matrix 

\[\left(\begin{array}{cccccc}
u&v&x&y&z&w\\
0&0&1&1&2&3\\
1&1&-3&-3&0&0\end{array}\right)\]
and let 
\begin{align*}
Q=w^2+z^3+(u^{12}+v^{12})M_4(x,y)z+R_{18}(u,v)x^2y^2(x-y)^2,
\end{align*}
where $M_4$ and  $R_{18}$ are homogeneous polynomials of degrees $4$ and $18$ respectively. Let $\mathcal{L}$ be the set of hypersurfaces given by $Q=0$ for all the different $M_4$ and $R_{18}$. Clearly $\mathcal{L}$ is a linear system of divisors. Let $X$ be a generic variety in $\mathcal{L}$. The variety $Y$ is a projective simplicial toric variety with the Cox ring $\mathds{C}[u,v,x,y,z,w]$. This ring has 2 gradings given by the matrix above. Divisors in $Y$ are the zero sets of homogeneous polynomials in the Cox ring. The grading of the Cox ring defines the degree of divisors on $X$, for example
\begin{align*}
\deg X = \deg Q = (6,0).
\end{align*}

Let $\pi_Y:Y\to\mathds{P}^1$ be defined by 
\begin{align*}
(u:v:x:y:z:w)\mapsto(u:v),
\end{align*}
clearly, this map is a $\mathds{P}(1,1,2,3)$-fibration. Let $\pi=\pi_Y\vert_X$, then a generic fiber of $\pi$ is del Pezzo surface of degree $1$. Indeed, for a fixed $(u:v)$ the fiber is a hypersurface of degree $6$ in $\mathds{P}(1,1,2,3)$ which is a del Pezzo surface of degree $1$. 

Denote the torus-invariant divisor given by $\alpha = 0$, $\alpha \in \Theta = \{u,v,x,y,z,w\}$, as $D_\alpha$, and note that $D_u\sim D_v \sim F$, where $F$ is a fiber of $\pi_Y$. It is easy to see that $\operatorname{Cl}(Y)=D_y \mathds{Z}\oplus F\mathds{Z}$.

\begin{Lemma} \label{LemRams}
Let $F$ be a fiber of $\pi_Y:Y\to\mathds{P}^1$ and suppose $X\in\mathcal{L}$. Then
\begin{enumerate}[(i)]
	\item $\big|X\big|=\big|6D_y+18F\big|$ is base point free,
	\item $X+F$ is ample,
	\item $X$ is big.
\end{enumerate}
\end{Lemma}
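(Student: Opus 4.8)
The plan is to do everything in the Cox/toric description of $Y$. All four assertions become explicit computations once one has three ingredients: the singular locus of $Y$; the linear equivalences $D_u\sim D_v\sim F$, $D_x\sim D_y$, $D_z\sim 2D_y+6F$, $D_w\sim 3D_y+9F$ among the torus-invariant divisors (read off from the grading matrix); and the equality $\rho(Y)=2$, which is immediate from $\operatorname{Cl}(Y)=\MZ D_y\oplus\MZ F$. The only genuinely geometric input is the singular locus: $\pi\colon Y\to\MP^1$ is a weighted projective bundle with fibre $\MP(1,1,2,3)$, and a routine check of the stabilizers of the $(\MC^\ast)^2$-action on the Cox coordinates shows $\Sing(Y)=C_2\sqcup C_3$ with $C_2=\{x=y=w=0\}$ and $C_3=\{x=y=z=0\}$, two disjoint smooth rational curves (sections of $\pi$) carrying transverse $\tfrac{1}{2}$- and $\tfrac{1}{3}$-quotient singularities. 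Then (i) follows at once: $Q\big|_{C_2}=z^3$ and $Q\big|_{C_3}=w^2$ are nowhere zero on $C_2$ and $C_3$ respectively (the locus $\{x=y=z=w=0\}$ being irrelevant), so $X\cap\Sing(Y)=\varnothing$; and a Weil divisor whose support avoids $\Sing(Y)$ is Cartier, because $\mathcal O_Y(X)$ is invertible on $Y\setminus X$ (where it is trivial) and on the smooth locus $Y\setminus\Sing(Y)$, and these two opens cover $Y$.

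For (ii), $|X|=|6D_y+18F|$ since $X$ has degree $(6,0)$, and $H^0(Y,6D_y+18F)$ is spanned by the degree-$(6,0)$ monomials in $u,v,x,y,z,w$. Among these are $w^2$, $z^3$, $x^6u^{18}$, $x^6v^{18}$, $y^6u^{18}$, $y^6v^{18}$: if all of them vanish at a point then $z=w=0$ and, from the last four, either $x=y=0$ or $u=v=0$; in both cases the point lies in the irrelevant locus. Hence $\operatorname{Bs}|X|=\varnothing$, and in particular $X$ is nef.

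For (iii), note first that $X+F$ is Cartier by (i) together with the fact that $F$, a fibre of $\pi$, is pulled back from $\MP^1$. Since $\rho(Y)=2$, $\overline{NE}(Y)$ is a $2$-dimensional pointed cone. Let $\ell$ be any curve contained in a fibre of $\pi$. Then $X$ is nef with $X\cdot\ell>0$ (because $X\big|_F\sim\mathcal O_{\MP(1,1,2,3)}(6)$ is ample on the fibre $F$) and $X\cdot C_2=0$ (because $X\cap C_2=\varnothing$), so the face of $\overline{NE}(Y)$ along which $X$ vanishes is exactly the ray $\MR_{\geqslant 0}[C_2]$; symmetrically $F$ is nef with $F\cdot\ell=0$ and $F\cdot C_2=1$ ($C_2$ being a section), so the face along which $F$ vanishes is $\MR_{\geqslant 0}[\ell]$. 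Two distinct extremal rays span a $2$-dimensional pointed cone, hence $\overline{NE}(Y)=\MR_{\geqslant 0}[\ell]+\MR_{\geqslant 0}[C_2]$. Finally $(X+F)\cdot\ell=X\cdot\ell>0$ and $(X+F)\cdot C_2=F\cdot C_2=1>0$, so $X+F$ is positive on $\overline{NE}(Y)\setminus\{0\}$, and is therefore ample by Kleiman's criterion.

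For (iv), $X$ is nef by (ii), so it is big if and only if $X^4>0$. In the Chow ring of $Y$ one has $F^2=0$ (since $D_u\cap D_v=\varnothing$) and $D_xD_yD_zD_w=0$ (since $\{x=y=z=w=0\}$ is irrelevant); substituting the linear equivalences above, the second relation becomes $6D_y^4+36D_y^3F=0$, while restriction to a fibre gives the normalization $D_y^3\cdot F=\mathcal O_{\MP(1,1,2,3)}(1)^3=\tfrac{1}{6}$. A short expansion then yields $X^4=(6D_y+18F)^4=6^4>0$. (Alternatively: by (ii) the morphism $\varphi_{|X|}$ embeds every fibre, since $\mathcal O_{\MP(1,1,2,3)}(6)$ is very ample, and separates distinct fibres, since the ratio $x^6u^{18}:x^6v^{18}$ does; hence $\varphi_{|X|}$ is generically finite onto a $4$-dimensional image, so $X$ is big.) The step requiring the most care is (iii): one must verify that the two exhibited rays really exhaust $\overline{NE}(Y)$, which is precisely what the ``a nef class vanishes along a face'' argument achieves; everything else is bookkeeping with the Cox ring, the only non-formal ingredient being the location of $\Sing(Y)$, which is standard for weighted projective bundles.
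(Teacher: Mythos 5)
Your proof is correct, and for half of the lemma it takes a genuinely different route from the paper. Parts (i) and (ii) coincide with the paper's argument: you locate $\Sing(Y)$ on the two curves $\{x=y=w=0\}$ and $\{x=y=z=0\}$, note that the coefficients of $w^2$ and $z^3$ in $Q$ never vanish so that $X$ misses $\Sing(Y)$ (hence is Cartier), and exhibit the same six monomials $w^2,z^3,x^6u^{18},x^6v^{18},y^6u^{18},y^6v^{18}$ with no common zero outside the irrelevant locus. For (iii) the paper argues that $\big|X+F\big|\supset\big|X\big|+\big|F\big|$ is base point free, so ampleness reduces to strict positivity on curves, checked by splitting into curves in fibres ($C\cdot(X+F)=C\cdot X>0$) and curves dominating $\MP^1$ ($C\cdot(X+F)\geqslant C\cdot F>0$); you instead determine $\overline{NE}(Y)=\MR_{\geqslant0}[\ell]+\MR_{\geqslant0}[C_2]$ from the two nef classes $X$ and $F$ and invoke Kleiman. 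Both are sound: the paper's version is shorter and never needs the extremal rays, while yours yields the Mori cone of $Y$ explicitly, which is extra information; the only delicate points — that $X^{\perp}\cap\overline{NE}(Y)$ is exactly the ray $\MR_{\geqslant0}[C_2]$ and that two distinct extremal rays exhaust a two-dimensional cone — are covered by pointedness of $\overline{NE}(Y)$ and $\rho(Y)=2$, as you indicate. For (iv) the paper is a one-liner given (iii): $2X\sim(X+F)+(6D_y+17F)$, an ample plus an effective class; your computation $X^4=6^4>0$ (via $F^2=0$, $D_xD_yD_zD_w=0$, $D_y^3\cdot F=\tfrac16$) and your alternative argument through generic finiteness of $\varphi_{\vert X\vert}$ are both correct but cost more bookkeeping. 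No gaps.
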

\begin{proof}
The equation of $D\in\big|X\big|$ may contain monomials: $w^2$, $z^3$, $x^6u^{18}$, $x^6v^{18}$, $y^6u^{18}$, and $y^6v^{18}$ which are not all equal zero at the same time at any point on $Y$, thus $(i)$ holds.

Suppose $C$ is a curve in a fiber then
\begin{align*}
C\cdot (X+F)=C\cdot X =C\cdot X\big\vert_F=\deg \big( \mathcal{O}_{\mathds{P}(1,1,2,3)}(6)\big)\big\vert_C>0.
\end{align*} 
Suppose a curve $C$ is not in a fiber. Then since $\big| X \big|$ is base point free 
\begin{align*}
C\cdot (X+F)\geqslant C\cdot F>0.
\end{align*}
Thus $X+F$ is ample by Kleiman criterion.

Clearly $2X\sim (X+F)+(6D_y+17F)$, hence $(iii)$ follows from $(ii)$.
\end{proof}

\begin{Lemma} \label{SingX}
Let $X$ be a generic divisor in a linear system $\mathcal{L}$.
Then the following assertions hold:
\begin{enumerate}[(i)]
	\item There are $108$ cuspidal curves of anticanonical degree $1$ in fibers of $\pi$: $72$ of them are given by the equations $R_{18}=M_4=0$, the other $36$ curves are given by $u^{12}+v^{12}=0$ and $x=0$, $y=0$, or $x=y$.

	\item Let $F$ be a fiber of $\pi$ and let $C\subset F$ be one of the $72$ curves. Let $P$ be the cusp of $C$, then $P$ is an ordinary double point of $F$.

	\item Let $C$ be one of the $36$ curves and let $P$ be the cusp of $C$. Then $P$ is an ordinary double point of $X$.

	\item The variety $X$ is smooth outside of the $36$ ordinary double points described in $(iii)$.
\end{enumerate}
\end{Lemma}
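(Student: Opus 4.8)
The plan is to reduce everything to an explicit local analysis of the affine charts of the toric variety $Y$ and the equation $Q=0$. The key observation is that the singular locus of $X$ and the singularities of the fibers of $X$ can only arise where several of the partial derivatives of $Q$ vanish simultaneously, and because the defining equation has the shape $w^2+z^3+(u^{12}+v^{12})M_4(x,y)z+R_{18}(u,v)x^2y^2(x-y)^2$, the vanishing of $\partial_w Q=2w$ and $\partial_z Q = 3z^2 + (u^{12}+v^{12})M_4 z$ forces $w=0$ and either $z=0$ or $z = -\tfrac13(u^{12}+v^{12})M_4$. I will treat the fiber-by-fiber picture: over a point $(u_0:v_0)\in\MP^1$ the fiber is the degree-$6$ hypersurface in $\MP(1,1,2,3)$ given by $w^2 = z^3 + z^2 P_2 + z P_4 + P_6$ with $P_2 = 0$, $P_4 = -(u_0^{12}+v_0^{12})M_4(x,y)$, $P_6 = -R_{18}(u_0,v_0)\,x^2y^2(x-y)^2$. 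A cuspidal anticanonical curve in such a del Pezzo surface of degree $1$ is cut out by a member of $|-K_F|$ (i.e.\ a choice of $z$ as a section of $\mathcal{O}(2)$) whose branch cubic acquires a cusp; the genericity of $M_4$ and $R_{18}$ makes this a transverse condition.

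**Main steps.** First, I would write down the anticanonical pencil on a fiber $F$: $|-K_F|$ is generated by the two sections $z$ and (a multiple of) a quadratic form in $x,y$, and a general member is a rational curve. A member is \emph{cuspidal} exactly when the corresponding plane cubic (in suitable coordinates) has a cusp, which happens at the base point of $|-K_F|$ for finitely many members. For the curves cut by $z=0$, the residual equation is $w^2 = P_6(x,y) = -R_{18}(u_0,v_0)x^2y^2(x-y)^2$; this is a cuspidal curve precisely when the sextic $P_6$ is a perfect square of a cubic with a triple structure of the right shape, i.e.\ exactly along $R_{18}=0$ (giving $P_6\equiv 0$, so $w^2=z^3+zP_4$ has the cusp coming from $M_4=0$) — this is the source of the $72$ curves, counted as $\deg R_{18} \cdot \deg M_4 = 18\cdot 4 = 72$. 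For the other family I take the locus $u_0^{12}+v_0^{12}=0$ (i.e.\ $12$ points of $\MP^1$), where $P_4$ drops out and the fiber becomes $w^2 = z^3 + P_6(x,y)$ with $P_6 = -R_{18}x^2y^2(x-y)^2$; here the three factors $x=0$, $y=0$, $x=y$ each produce a cuspidal member, giving $12\cdot 3 = 36$ curves. That establishes (i).

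**Local analysis at the cusps.** For (ii) and (iii) I would compute, in the relevant affine chart (dehomogenize so that, say, $y=1$ and either $u=1$ or $v=1$), the Hessian / $2$-jet of $Q$ at the cusp point $P$. For the $72$ curves: fix $(u_0:v_0)$ with $R_{18}(u_0,v_0)=0$; then in the fiber the branch locus $z^3 + z P_4(x,y)$ has a cusp where $M_4(x,y)$ has the appropriate zero, and one checks that the quadratic part of $w^2 = z^3 + zP_4$ in the variables $(w,z,x)$ (with $x$ the coordinate along the curve $M_4=0$) is nondegenerate of rank $3$ once one also uses the derivative in the $\MP^1$-direction $t$; since this is a singularity \emph{of the fiber} but not of $X$, one expects rank exactly $2$ on $F$ — an ordinary double point of $F$ — and full rank on $X$. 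For the $36$ curves: at $u_0^{12}+v_0^{12}=0$, the $t$-derivative of $(u^{12}+v^{12})M_4 z$ does \emph{not} vanish generically, so $X$ itself acquires an $A_1$-point there. Finally (iv) follows by a dimension count: away from these finitely many explicitly located points, the Jacobian criterion shows $X$ is smooth for generic $M_4, R_{18}$, using Bertini on the base-point-free part of $\mathcal{L}$ (Lemma on $|X|$ base point free) together with a direct check along the two singular curves of $Y$ and along the coordinate loci $x=0$, $y=0$, $x=y$, $z=0$ where Bertini does not immediately apply.

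**Expected main obstacle.** The genuinely delicate point is (ii)–(iii): distinguishing whether a given cusp $P$ is a singularity of the total space $X$ or only of the fiber $F$, and in either case verifying it is an \emph{ordinary} double point rather than something worse. This requires carefully tracking the $2$-jet of $Q$ including the base-direction variable $t$ on $\MP^1$, and checking that the genericity of $M_4$ and $R_{18}$ is exactly enough to keep the relevant quadratic form of maximal possible rank (rank $2$ when restricted to $F$, rank $3$ on $X$ in the $36$-point case). One must also rule out coincidences — e.g.\ a zero of $M_4$ colliding with a zero of $R_{18}$, or with a root of $x^2y^2(x-y)^2$, or two of the $36$ points colliding — all of which are avoided by choosing $M_4, R_{18}$ generic, but this has to be stated and justified by the codimension count. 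The rest of the argument is bookkeeping with Bertini and the toric charts.
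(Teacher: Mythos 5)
Your overall strategy---fiberwise Weierstrass analysis, a $2$-jet computation at the cusps including the base direction, and a Bertini-type argument for smoothness elsewhere---is the same as the paper's, and your counts $18\cdot 4=72$ and $12\cdot 3=36$ are the right ones. But several of the concrete claims you would rely on are wrong. First, on a fiber $w^2=z^3+zP_4+P_6$ in $\MP(1,1,2,3)$ the system $|-K_F|$ is spanned by $x$ and $y$ (sections of $\mathcal{O}(1)$), not by $z$ and a quadric in $x,y$ (those are sections of $\mathcal{O}(2)$); the cuspidal members are the restrictions to lines $\mu x=\lambda y$ on which both the $z$-coefficient $(u^{12}+v^{12})M_4$ and the constant term $R_{18}x^2y^2(x-y)^2$ vanish, and their cusps sit at $w=z=0$ on those lines---not at the base point $x=y=0$ of the pencil, which is a smooth point of every member. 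Second, and more seriously, your rank targets in (ii)--(iii) are off by one: an ordinary double point of the surface $F$ (a hypersurface in the $3$-dimensional fiber of $Y\to\MP^1$) requires the quadratic part of its local equation to have rank $3$, and an ordinary double point of the $3$-fold $X\subset Y$ requires rank $4$; ``rank exactly $2$ on $F$'' and ``rank $3$ on $X$'' would certify $A_{\geqslant 2}$ or worse singularities, so executing your check with those criteria would verify the wrong thing. The computation you propose should instead land on the local normal forms $w^2+z^3+sz+t=0$ at the cusps of the $72$ curves ($X$ smooth there since $R_{18}$ has a simple zero, and the fiber $t=0$ has quadratic part $w^2+sz$ of rank $3$, an ODP of $F$) and $w^2+z^3+zt+s^2=0$ at the cusps of the $36$ curves (quadratic part $w^2+zt+s^2$ of rank $4$, an ODP of $X$), with genericity used exactly to keep the zeros of $R_{18}$, $M_4$, $u^{12}+v^{12}$, $xy(x-y)$ simple and disjoint, as you note.

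For (iv), appealing to base-point-freeness of the complete system $|X|$ (Lemma 4.1) buys you nothing: $X$ is generic only in the much smaller system $\mathcal{L}$, whose base locus is positive-dimensional (it contains, for instance, the curves $w=z=0$, $xy(x-y)=0$), and classical Bertini only gives smoothness away from the base locus of the system you actually vary in. The paper closes this by perturbing $R_{18}$ and $M_4$ separately inside $\mathcal{L}$: any singular point of a generic member must have $w=0$, then $xy(x-y)=0$ (move $R_{18}$), then, using $\partial Q/\partial z=3z^2+(u^{12}+v^{12})M_4$ and the fact that $M_4$ shares no roots with $xy(x-y)$, also $z=u^{12}+v^{12}=0$; this pins the singular locus to exactly the $36$ points already shown to be ordinary double points. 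Your ``direct check along the coordinate loci'' gestures at this, but as stated it is not an argument; you need the explicit perturbation inside $\mathcal{L}$ (or an explicit Jacobian computation along its base locus) to complete the proof.
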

\begin{proof}
A fiber of $\pi$ is defined by the ratio $(u:v)$ and curves of degree $1$ in it by $(x:y)$. 
Once these ratios are fixed the curve is given by the equation
\begin{align*}
w^2 + z^3 + a_1 s^4 z + a_0 s^6 = 0
\end{align*}
in a weighted projective space $\MP(1_s, 2_z, 3_w)$.
Note that the coefficient at $z^2$ is zero for every fiber, therefore the curve is cuspidal if and only if $a_1 = a_0 = 0$, that is if and only if
\begin{align*}
R_{18}(u,v)x^2&y^2(x-y)^2=0,\\
(u^{12}+v^{12}&)M_4(x,y)=0.
\end{align*}
As $R_{18}$ and $M_4$ are generic we must have $R_{18}=0$ and $M_4=0$ ($72$ curves) or $u^{12}+v^{12}=0$ and $x^2y^2(x-y)^2=0$ ($36$ curves), thus $(i)$ holds. Note that the cusps of these curves are at $w=z=0$.

The local equation of $X$ at the cusp of one of the $72$ curves is
\begin{align*}
w^2+z^3+zs(x,y)+t(u,v)=0,
\end{align*}
where $t$ is a linear factor of $R_{18}$ and $s$ is a linear factor of $M_4$. Clearly the fiber $t=0$ has an ordinary double point at $w=z=s(x,y)=0$.

If $C$ is one of the $36$ curves, then the local equation of $X$ at the cusp of $C$ is
\begin{align*}
w^2+z^3+zt(u,v)+s^2(x,y)=0,
\end{align*}
where $s=x$, $s=y$, or $s=x-y$ and $t$ is a linear factor of $u^{12}+v^{12}$. Clearly $X$ has an ordinary double point at $w=z=t(u,v)=s(x,y)=0$.

Note that $X$ does not pass through the singular locus of $Y$. Suppose $X$ is singular at the point $P$ with coordinates $(x,y,z,w,u,v)$. Then, clearly $w=0$. By Bertini's theorem \cite[Theorem~4.1]{KSP} the point $P$ is a base point of $\mathcal{L}$.

There is a polynomial $t\in\mathds{C}[u,v]$ of degree $1$ such that $t\neq0$ at $P$. Let $X_t\in \mathcal{L}$ be a variety with $R_{18}=t^{18}$. Suppose $xy(x-y)\neq 0$ then for some $c\in\mathds{C}$ the point $P$ does not lie on $X_{ct}$. Thus $x=0$, $y=0$, or $x=y$.

Suppose $z(u^{12}+v^{12})\neq0$ at $P$. Then repeating the argument for some polynomial $s\in \mathds{C}[x,y]$ of degree $1$ and $M_4=(cs)^4$ we conclude that $P$ is not a base point of $\mathcal{L}$. Thus $z=0$ or $(u^{12}+v^{12})=0$. Since the polynomial $M_4$ is generic, we may assume that $M_4\neq 0$ at $P$. Therefore $(u^{12}+v^{12})=0$ if and only if $z=0$, because
$$\frac{\partial Q_P}{\partial z}=3z^2+(u^{12}+v^{12})M_4(x,y)=0.$$

Thus every singular point must satisfy $w=z=u^{12}+v^{12}=xy(x-y)=0$. There are $36$ points satisfying these equations and they are described in $(iii)$. Note that there are $3$ ordinary double points in the fiber $t(u,v)=0$, where $t$ is a linear factor of $u^{12}+v^{12}$.
\end{proof}

We now prove that the fibration $\pi:X\to \mathds{P}^1$ is a Mori fiber space. We already know that $X$ is terminal and that a generic fiber is a del Pezzo surface. 
Thus we only need to show that it is $\mathds{Q}$-factorial and that the relative Picard rank $\rho(X/\mathds{P}^1)=1$.

Let $U=Y\setminus \operatorname{Sing}(Y)$ and let $i:U\to Y$ be the inclusion map, then denote $\bar{\Omega}_Y^1=i_*(\Omega_U^1)$.

\begin{Th}[{\cite[Theorem~4.1]{Rams}}] \label{RamsTh}
Let $Y$ be a Cohen-Macaulay fourfold and let $X\subset Y$ be a hypersurface such that $\operatorname{Sing}(Y)\cap X=\emptyset$ and $\operatorname{Sing}(X)$ consists only of ordinary double points. Let $\widetilde{X}$ be the resolution of $X$ obtained by blowing up the ordinary double points. Let $\mu_X$ be the number of singular points on $X$ and let $\mathcal{I}_{\operatorname{Sing}(X)}$ be the sheaf of ideals on $Y$ of singular points of $X$. 
Assume that the following conditions are satisfied:
\begin{enumerate}[(i)]

	\item $H^i\big(Y,\mathcal{O}_Y(-2X)\big)=0$ for $i=1,2,3$;

	\item $H^i\big(Y,\mathcal{O}_Y(-X)\big)=0$ for $i=1,2,3$;

	\item $H^i\big(Y,\bar{\Omega}_Y^1\otimes\mathcal{O}_Y(-X)\big)=0$ for $i=1,2,3$;

	\item $H^2(Y,\bar{\Omega}_Y^1)=0$.
\end{enumerate}
Denote $\delta_X=h^0\big(Y,\mathcal{O}(K_Y+2X)\otimes\mathcal{I}_{\operatorname{Sing}(X)}\big)-\big(h^0(Y,\mathcal{O}(K_Y+2X))-\mu_X\big)$. 
Then 
$$h^{1,1}(\widetilde{X})=h^1(Y,\bar{\Omega}^1_Y)+\mu_X+\delta_X.$$
\end{Th}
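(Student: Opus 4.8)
The plan is to convert the computation of $h^{1,1}(\widetilde X)$ into cohomology on a smooth model of the ambient variety, via three standard exact sequences, and then to recognise hypothesis (iv) as the vanishing of the one cohomology group that could push $h^{1,1}(\widetilde X)$ above $h^1(\bar{\Omega}^1_Y)+\mu_X$. First I would resolve the ambient variety and the hypersurface simultaneously: let $p\colon\widetilde Y\to Y$ be the blow-up of $Y$ along $\Sigma=\Sing X$. Since $\Sigma$ consists of ordinary double points of $X$ and $\Sigma\cap\Sing Y=\emptyset$, the centre is a finite set of smooth points of $Y$, each exceptional divisor $E_q$ ($q\in\Sigma$) is a $\MP^3$, the strict transform of $X$ is precisely $\widetilde X$ (smooth), $\widetilde X\sim p^*X-2\sum_qE_q$, $\mathcal O_{\widetilde X}(-\widetilde X)=\mathcal N^\vee_{\widetilde X/\widetilde Y}$, and $\widetilde X$ is disjoint from $\Sing\widetilde Y$, so near $\widetilde X$ everything is smooth.

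On the pair $(\widetilde Y,\widetilde X)$ I would use: the ideal-sheaf sequence $0\to\mathcal O_{\widetilde Y}(-\widetilde X)\to\mathcal O_{\widetilde Y}\to\mathcal O_{\widetilde X}\to0$ together with its twist by $\mathcal O_{\widetilde Y}(-\widetilde X)$; the restriction sequence $0\to\bar{\Omega}^1_{\widetilde Y}(-\widetilde X)\to\bar{\Omega}^1_{\widetilde Y}\to\bar{\Omega}^1_{\widetilde Y}\vert_{\widetilde X}\to0$; and the conormal sequence $0\to\mathcal O_{\widetilde X}(-\widetilde X)\to\bar{\Omega}^1_{\widetilde Y}\vert_{\widetilde X}\to\Omega^1_{\widetilde X}\to0$. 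Chasing these shows that $h^{1,1}(\widetilde X)=h^1(\widetilde X,\Omega^1_{\widetilde X})$ equals $h^1(\widetilde Y,\bar{\Omega}^1_{\widetilde Y})$ as soon as $H^1(\widetilde X,\mathcal O_{\widetilde X}(-\widetilde X))=H^2(\widetilde X,\mathcal O_{\widetilde X}(-\widetilde X))=0$ and $H^i(\widetilde Y,\bar{\Omega}^1_{\widetilde Y}(-\widetilde X))=0$ for $i=1,2$; and the blow-up formula for a finite set of smooth points of a fourfold gives $h^1(\widetilde Y,\bar{\Omega}^1_{\widetilde Y})=h^1(\bar{\Omega}^1_Y)+\mu_X$ (each $\MP^3$ contributes one class to $H^2$). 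So the theorem reduces to producing those vanishings out of hypotheses (i)--(iv).

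Next I would transfer the required vanishings down to $Y$. Writing $\mathcal O_{\widetilde Y}(-\widetilde X)=p^*\mathcal O_Y(-X)\otimes\mathcal O(2\sum_qE_q)$ and $\mathcal O_{\widetilde Y}(-2\widetilde X)=p^*\mathcal O_Y(-2X)\otimes\mathcal O(4\sum_qE_q)$, and filtering $\mathcal O(k\sum_qE_q)$ by the subsheaves $\mathcal O(j\sum_qE_q)$, one computes $R^\bullet p_*$ of these sheaves on the exceptional $\MP^3$'s using $\mathcal O(E_q)\vert_{E_q}=\mathcal O_{\MP^3}(-1)$, Bott vanishing $H^\bullet(\MP^3,\mathcal O(-j))=0$ for $j=1,2,3$, $H^3(\MP^3,\mathcal O(-4))=\MC$, and the Euler sequence giving $H^\bullet(\MP^3,\Omega^1_{\MP^3}(-2))=0$. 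The outcome is $R^{\geqslant1}p_*\mathcal O_{\widetilde Y}(-\widetilde X)=0$ and $R^{\geqslant1}p_*\big(\bar{\Omega}^1_{\widetilde Y}(-\widetilde X)\big)=0$, with $p_*$ giving $\mathcal O_Y(-X)$ and $\bar{\Omega}^1_Y(-X)$ respectively, while $R^qp_*\mathcal O_{\widetilde Y}(-2\widetilde X)$ equals $\mathcal O_Y(-2X)$ for $q=0$, $\mathcal O_\Sigma$ for $q=3$, and $0$ for $q=1,2$. Hence by (ii) and (iii) the $\bar{\Omega}^1$- and $\mathcal O(-\widetilde X)$-vanishings hold, by (i) one gets $H^1=H^2=0$ for $\mathcal O_{\widetilde Y}(-2\widetilde X)$, and $H^3(\widetilde Y,\mathcal O_{\widetilde Y}(-2\widetilde X))$ sits in an exact sequence $0\to H^3(\widetilde Y,\mathcal O_{\widetilde Y}(-2\widetilde X))\to\MC^{\mu_X}\xrightarrow{\ \partial\ }H^4(Y,\mathcal O_Y(-2X))$. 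Feeding all this into the twisted ideal-sheaf sequence gives $H^1(\widetilde X,\mathcal O_{\widetilde X}(-\widetilde X))=0$ and $H^2(\widetilde X,\mathcal O_{\widetilde X}(-\widetilde X))\cong\ker\partial$. Finally, Serre duality on the complete toric fourfold $Y$ (which is Cohen--Macaulay) identifies $H^4(Y,\mathcal O_Y(-2X))$ with $H^0(Y,\mathcal O_Y(K_Y+2X))^\vee$ and $\partial$ with the transpose of the evaluation map $H^0(Y,\mathcal O_Y(K_Y+2X))\to\bigoplus_{q\in\Sigma}\mathcal O_Y(K_Y+2X)\otimes\MC(q)$, whose kernel is $H^0(Y,\mathcal I_\Sigma\otimes\mathcal O_Y(K_Y+2X))$; thus $\dim\ker\partial=\mu_X-\big(h^0(\mathcal O_Y(K_Y+2X))-h^0(\mathcal I_\Sigma\otimes\mathcal O_Y(K_Y+2X))\big)=\delta_X$. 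So condition (iv) says exactly $H^2(\widetilde X,\mathcal O_{\widetilde X}(-\widetilde X))=0$, which closes the reduction and yields $h^{1,1}(\widetilde X)=h^1(\bar{\Omega}^1_Y)+\mu_X$.

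The step I expect to be the main obstacle is the last one: verifying that the connecting homomorphism $\partial$ really is (dual to) evaluation of sections of $\mathcal O_Y(K_Y+2X)$ at the nodes. This rests on a local duality computation at each node --- $H^3(\MP^3,\omega_{\MP^3})=\MC$ is the residue along $E_q$ and pairs with the value of a section of $\mathcal O_Y(K_Y+2X)$ there --- combined with Serre duality on $Y$, and it is exactly this identification that ties the Hodge number $h^{1,1}(\widetilde X)$ to the linear-algebra defect $\delta_X$. By contrast the $R^\bullet p_*$ computations are routine Bott/Euler calculations on $\MP^3$, and the blow-up formula $h^1(\bar{\Omega}^1_{\widetilde Y})=h^1(\bar{\Omega}^1_Y)+\mu_X$ is standard.
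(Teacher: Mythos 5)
This theorem is not proved in the paper at all: it is quoted from Rams (the cited Theorem~4.1) and used as a black box in the proof of Proposition~4.5, so there is no in-paper argument to measure your proposal against. Judged on its own, your outline is the standard defect-theoretic proof (Cynk's method, which is essentially what the cited source adapts to toric ambient fourfolds): blow up $Y$ at the nodes, compare $\Omega^1_{\widetilde X}$ with $\bar{\Omega}^1_{\widetilde Y}$ via the conormal and restriction sequences, push down through Leray using Bott vanishing on the exceptional $\MP^3$'s, and identify hypothesis (iv) with the injectivity of the connecting map $\MC^{\mu_X}\to H^4(Y,\mathcal{O}_Y(-2X))$, i.e.\ with the vanishing of $H^2(\widetilde X,\mathcal{O}_{\widetilde X}(-\widetilde X))$. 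That skeleton is sound, and you correctly isolate the one step carrying real content, namely that this connecting map is dual to evaluation of sections of $\mathcal{O}_Y(K_Y+2X)$ at the nodes, so that $\dim\ker\partial=\delta_X$. Two points deserve more care than you give them: since $Y$ is only assumed complete and simplicial, $X$ and $K_Y+2X$ are in general only $\MQ$-Cartier, so $\mathcal{O}_Y(-X)$, $\mathcal{O}_Y(-2X)$, $\mathcal{O}_Y(K_Y+2X)$ are merely reflexive sheaves and the duality $H^4(Y,\mathcal{O}_Y(-2X))^\vee\cong H^0(Y,\mathcal{O}_Y(K_Y+2X))$ needs justification (via the Ext form of duality on the Cohen--Macaulay toric $Y$, using that everything is Cartier near $X$); and $\widetilde X$, $\widetilde Y$ need only be proper, not projective, so $h^{1,1}(\widetilde X)=h^1(\Omega^1_{\widetilde X})$ and the blow-up formula $h^1(\bar{\Omega}^1_{\widetilde Y})=h^1(\bar{\Omega}^1_Y)+\mu_X$ should be invoked in that generality. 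Neither issue breaks the strategy — both are dealt with in the cited reference — so your proposal is a faithful reconstruction of the external proof rather than an alternative to anything actually argued in this paper.
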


The number $h^0\big(Y,\mathcal{O}(K_Y+2X)\otimes\mathcal{I}_{\operatorname{Sing}(X)}\big)$ is the dimension of the space of the divisors $D$ such that $D \sim K_Y + 2 X$ and $\Sing (X) \subset \Supp (D)$.
The number $h^0(Y,\mathcal{O}(K_Y+2X))-\mu_X$ is the expected dimension of that space, that is the dimension if the singularities of $X$ are in a general position.
Thus the number $\delta_X$ is the difference between the expected and the actual dimension of the space and is always non-negative. 
It is known as the \emph{defect} of a hypersurface \cite{Rams}. 

\begin{Lemma}
For a generic $X\in\mathcal{L}$ the defect $\delta_X=0$.
\end{Lemma}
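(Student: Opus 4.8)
The plan is to read $\delta_X$ off the exact sequence
$$0\longrightarrow\mathcal{I}_{\Sing(X)}\otimes\mathcal{O}_Y(K_Y+2X)\longrightarrow\mathcal{O}_Y(K_Y+2X)\longrightarrow\bigoplus_{P\in\Sing(X)}\MC_P\longrightarrow 0.$$
Taking global sections, the quantity $\delta_X$ equals $\mu_X$ minus the rank of the evaluation map $H^0\big(Y,\mathcal{O}_Y(K_Y+2X)\big)\to\MC^{\mu_X}$, so $\delta_X=0$ is equivalent to the $36$ ordinary double points of $X$ imposing independent linear conditions on the complete linear system $\big|K_Y+2X\big|$. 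The whole statement thus reduces to proving this surjectivity.

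First I would identify the bundle. By the toric formula $K_Y\sim-\sum_{s\in Z}D_s$, so in the grading of the Cox ring $K_Y$ has bidegree $(-7,4)$; since $2X$ has bidegree $(12,0)$, the divisor $K_Y+2X$ has bidegree $(5,4)$. As $Y$ is a simplicial toric variety, $H^0\big(Y,\mathcal{O}_Y(K_Y+2X)\big)$ is spanned by the monomials $u^av^bx^cy^dz^ew^f$ of bidegree $(5,4)$, that is those with $c+d+2e+3f=5$ and $a+b=4+3(c+d)$. By Lemma 4.3 every point of $\Sing(X)$ satisfies $z=w=0$, lies over one of the $12$ roots of $u^{12}+v^{12}$, and has $(x:y)\in\{(1:0),(0:1),(1:1)\}$; hence $\Sing(X)$ is the product configuration $\{P_1,\dots,P_{12}\}\times\{(1:0),(0:1),(1:1)\}$, consisting of $36$ distinct points of $Y$.

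Trivializing $\mathcal{O}_Y(K_Y+2X)$ near such a point, the only monomials above that do not automatically vanish there are the ones with $e=f=0$, namely $u^av^bx^cy^d$ with $c+d=5$ and $a+b=19$. This $120$-dimensional space is the tensor product of $H^0\big(\MP^1,\mathcal{O}(19)\big)$ (binary forms of degree $19$ in $u,v$) and $H^0\big(\MP^1,\mathcal{O}(5)\big)$ (binary forms of degree $5$ in $x,y$), and the evaluation map at the $36$ product points factors as the tensor product of evaluation $\MC^{20}\to\MC^{12}$ at the $12$ distinct points $P_1,\dots,P_{12}\in\MP^1$ and evaluation $\MC^{6}\to\MC^{3}$ at the $3$ points $(1:0),(0:1),(1:1)\in\MP^1$. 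Both maps are surjective: $k$ distinct points of $\MP^1$ impose independent conditions on $\mathcal{O}(d)$ as soon as $d\geqslant k-1$, and here $19\geqslant 11$ and $5\geqslant 2$ (the second can also be seen from the nonvanishing $3\times 3$ minor on the monomials $x^5,x^4y,y^5$). A tensor product of surjective linear maps is surjective, so $H^0\big(Y,\mathcal{O}_Y(K_Y+2X)\big)\to\MC^{36}$ is surjective and $\delta_X=0$.

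The argument is essentially bookkeeping. The only points needing care are that $H^0$ of the toric line bundle really is the monomial space of the prescribed bidegree, that evaluation at the singular points genuinely factors through the $e=f=0$ monomials once the bundle is trivialized, and the correct identification of $\Sing(X)$ via Lemma 4.3 as a product of a $12$-point set and a $3$-point set so that the evaluation map is a tensor product; none of these is a serious obstacle.
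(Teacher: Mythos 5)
Your proof is correct, and it takes a genuinely different route from the paper's. Both arguments come down to the same statement — that the $36$ nodes impose independent linear conditions on $\big|K_Y+2X\big|$, which by the exact sequence you write down is exactly $\delta_X=0$ — but they verify it differently. You use the Cox-ring description of $H^0\big(Y,\mathcal{O}_Y(K_Y+2X)\big)$ as the monomials of bidegree $(5,4)$, observe that at the nodes (where $z=w=0$) only the monomials $u^av^bx^cy^d$ with $a+b=19$, $c+d=5$ can be nonzero, and exploit the product structure of $\Sing(X)$ (the $12$ roots of $u^{12}+v^{12}$ times the three points $(1{:}0),(0{:}1),(1{:}1)$) to factor the evaluation map as a tensor product of two interpolation maps on $\MP^1$, each surjective since $19\geqslant 11$ and $5\geqslant 2$. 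The paper instead exhibits, for each node $P_i$, an explicit separating divisor in $\big|K_Y+2X\big|=\big|5D_x+19F\big|$: eleven of the twelve relevant fibers, two of $D_x,D_y,D_{x-y}$, plus the auxiliary divisor $(x-2y)^3u^8=0$ which misses all nodes — the same surjectivity checked by hand. Your version is more systematic and makes transparent that no genericity beyond Lemma 4.2 is used; the paper's is shorter given the toric divisors at hand. Two small touch-ups: the description of $\Sing(X)$ you invoke is Lemma 4.2, not 4.3; and since the nodes lie in the smooth locus of $Y$, evaluating bidegree-$(5,4)$ monomials at chosen coordinate lifts computes the true evaluation map only up to a nonzero scalar at each point — which of course does not affect surjectivity, but deserves a sentence.
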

\begin{proof}
Since $-K_Y$ is the sum of the torus-invariant divisors we have
\begin{align*}
\operatorname{deg} K_Y=(-1-1-2-3,-1-1+3+3)=(-7,4),
\end{align*}
hence $K_Y\sim_\mathds{Q} -7D_y-17F$ as $\operatorname{deg} D_y=(1,-3)$. Let $P_i$, $i=1,\dots, 36$, be the ordinary double points of $X$. It is enough show that there are divisors 
\begin{align*}
H_i\in\big|2X+K_Y\big|=\big|5D_x+19F\big|, \quad i=1\dots 36,
\end{align*}
such that $P_j\in H_i$ for any $j\neq i$ and $P_i\not\in H_i$. Let $F_k$ be the fibers containing the singular points of $X$, there are $12$ of them, since there are $3$ ordinary double points in each $F_k$. Let $D_{x-y}$ be the divisor defined by $y=x$. Taking $11$ fibers $F_k$ and $2$ divisors out of $D_x$, $D_y$, $D_{x-y}$ we get a divisor $H^\prime_i$ linearly equivalent to $2D_x+11F$ passing through any $35$ singular points out of $36$. The divisor $A$ given by the equation $(x-2y)^3u^{8}$ does not pass through the singular points of $X$, therefore we may set $H_i=H^\prime_i+A$.
\end{proof}

\begin{Prop}
Suppose $X\in\mathcal{L}$ is generic. Then $X$ is $\mathds{Q}$-factorial and 
$$\operatorname{Pic}X\otimes\mathds{Q}=\big(D_y\big\vert_X \big) \mathds{Q} \oplus F \mathds{Q},$$
where $F$ is a fiber of $\pi$.

\end{Prop}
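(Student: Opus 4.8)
The plan is to apply the Lefschetz-type theorem of Rams (Theorem 4.3) to the nodal hypersurface $X\subset Y$ and then to extract the statement about $\Pic(X)\otimes\MQ$ from the resulting formula for $h^{1,1}$ of a resolution.

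First I would verify the hypotheses of Theorem 4.3. By construction $Y$ is a complete simplicial toric fourfold; Lemma 4.1(i) gives that $X$ is Cartier with $\Sing(Y)\cap X=\emptyset$; Lemma 4.2(iv) says that $\Sing(X)$ consists of exactly $\mu_X=36$ ordinary double points; and condition (iv), i.e.\ $\delta_X=0$, is precisely Lemma 4.4. So the work lies in the three vanishing conditions (i)--(iii). Here I would use the fibration $\pi\colon Y\to\MP^1$: its fibres are copies of $\MP(1,1,2,3)$, on which $X\sim 6D_y+18F$ restricts to $\mathcal O(6)$, and $\MP(1,1,2,3)$ has $H^j(\mathcal O(k))=0$ for $0<j<3$ while $H^3(\mathcal O(k))\cong H^0(\mathcal O(-7-k))^\vee$. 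Using this, the relative Euler sequence of $\pi$, and the relative cotangent sequence $0\to\pi^*\Omega^1_{\MP^1}\to\bar{\Omega}^1_Y\to\bar{\Omega}^1_{Y/\MP^1}\to 0$, a short computation shows that for each of the sheaves $\mathcal O_Y(-X)$, $\mathcal O_Y(-2X)$, $\bar{\Omega}^1_Y\otimes\mathcal O_Y(-X)$ the higher direct images $R^q\pi_*$ vanish for $q\leqslant 2$, whereas $R^3\pi_*$ is (by relative duality) dual to a $\pi$-pushforward of a sheaf twisted by $X$; since $X$ contains the summand $18F$, that pushforward is a sum of line bundles of positive degree on $\MP^1$, so $R^3\pi_*$ of each sheaf is a sum of negative line bundles on $\MP^1$. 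The Leray spectral sequence then yields $H^i\bigl(Y,\mathcal O_Y(-X)\bigr)=H^i\bigl(Y,\mathcal O_Y(-2X)\bigr)=H^i\bigl(Y,\bar{\Omega}^1_Y\otimes\mathcal O_Y(-X)\bigr)=0$ for $i=1,2,3$, which is (i)--(iii). This cohomological bookkeeping is the only delicate point; everything after it is formal. (Alternatively one computes these groups directly from the $(\MC^*)^2$-graded Cox description of $Y$.)

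With the hypotheses in place, Theorem 4.3 gives $h^{1,1}(\widetilde X)=h^1(\bar{\Omega}^1_Y)+\mu_X$, where $\widetilde X\to X$ is the blow-up of the $36$ nodes, a smooth threefold whose exceptional divisors are copies of $\MP^1\times\MP^1$. For the complete simplicial toric fourfold $Y$ one has $h^1(\bar{\Omega}^1_Y)=\operatorname{rank}\Cl(Y)=6-4=2$, hence $h^{1,1}(\widetilde X)=38$. Since $X$ is a del Pezzo fibration over $\MP^1$ it is rationally connected (its general fibre being a rational surface and its base $\MP^1$), and so is $\widetilde X$; in particular $H^2(\widetilde X,\mathcal O_{\widetilde X})=0$, so $\rho(\widetilde X)=b_2(\widetilde X)=h^{1,1}(\widetilde X)=38$.

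It remains to pass from $\widetilde X$ back to $X$. Let $E_1,\dots,E_{36}$ be the exceptional divisors of $\sigma\colon\widetilde X\to X$; they are disjoint with $\mathcal O_{\widetilde X}(E_i)|_{E_i}\cong\mathcal O(-1,-1)$, so for ruling lines $\ell_i\subset E_i$ one has $E_i\cdot\ell_j=-\delta_{ij}$, and the classes $E_1,\dots,E_{36}$ are linearly independent in $\Pic(\widetilde X)\otimes\MQ$. Since $\Cl(X)\otimes\MQ\cong\bigl(\Cl(\widetilde X)\otimes\MQ\bigr)/\langle E_1,\dots,E_{36}\rangle=\bigl(\Pic(\widetilde X)\otimes\MQ\bigr)/\langle E_1,\dots,E_{36}\rangle$, we get $\operatorname{rank}\Cl(X)\otimes\MQ=38-36=2$. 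On the other hand $D_y|_X$ and $F$ lie in $\Pic(X)\otimes\MQ$ — the first as the restriction to $X$ of a $\MQ$-Cartier divisor on the simplicial toric variety $Y$, the second as a fibre of $\pi|_X$ — and they are not proportional, because $F$ restricts to $0$ on a general fibre of $\pi|_X$ whereas $D_y|_X$ does not. Hence $D_y|_X$ and $F$ span a two-dimensional subspace of $\Pic(X)\otimes\MQ\subseteq\Cl(X)\otimes\MQ$, which has rank $2$; therefore $\Pic(X)\otimes\MQ=\Cl(X)\otimes\MQ=\MQ\cdot D_y|_X\oplus\MQ\cdot F$. In particular every Weil divisor on $X$ is $\MQ$-Cartier, so $X$ is $\MQ$-factorial, and $\rho(X/\MP^1)=\rho(X)-1=1$, so that $X$ is a Mori fibre space. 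The main obstacle is the cohomological verification of (i)--(iii); the rest is bookkeeping with Picard groups under $\widetilde X\to X$.
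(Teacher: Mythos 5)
Your overall strategy coincides with the paper's: both proofs hinge on Rams' Theorem 4.3, with the hypotheses $\Sing(Y)\cap X=\emptyset$, the $36$ nodes, and $\delta_X=0$ supplied by Lemmas 4.1, 4.2 and 4.4, followed by the same bookkeeping $\operatorname{rk}\Cl(X)=\operatorname{rk}\Pic(\widetilde X)-36=2$ and the identification of $D_y\vert_X$, $F$ as generators (your explicit remarks that $h^{1,1}(\widetilde X)=\rho(\widetilde X)$ because $H^2(\mathcal O_{\widetilde X})=0$, and that a Weil divisor $\MQ$-linearly equivalent to a $\MQ$-Cartier one is itself $\MQ$-Cartier, are steps the paper leaves implicit, and they are correct). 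Where you genuinely diverge is the verification of the vanishings (i)--(iii), which is indeed the only technical point. The paper stays inside toric geometry: $\big|X\big|$ is big and nef by Lemma 4.1, so toric Kawamata--Viehweg vanishing plus toric Serre duality give (i) and (ii), and for (iii) it uses the generalized Euler sequence $0\to\bar\Omega^1_Y(-X)\to\bigoplus_s\mathcal O_Y(-D_s-X)\to\mathcal O_Y(-X)^{\oplus 2}\to 0$ of \cite[Theorem~8.1.6]{CLS}, reducing everything to vanishings for $\mathcal O_Y(-D_s-X)$, handled again by KV vanishing and duality (with a small extra argument for $s=x,y$ via $H^i(Y,\mathcal O_Y(F))=0$). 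You instead exploit the $\MP(1,1,2,3)$-bundle structure of $\pi$, computing fiberwise cohomology and feeding it into Leray together with relative duality. That route can be made to work (the fiberwise groups do vanish in degrees $\leqslant 2$, and the degree bookkeeping coming from the $18F$ summand does make $R^3\pi_*$ negative), but as written it is a sketch, and two points would need care if you fleshed it out: exactness of the relative cotangent sequence for the Zariski sheaf $\bar\Omega^1_Y$ on the singular simplicial toric $Y$, and the use of relative duality for the non-locally-free sheaf $\bar\Omega^1_Y(-X)$; the cleanest repair is your own alternative of computing the graded pieces via the Cox ring, which is in effect what the paper's Euler-sequence argument does while also sidestepping both issues. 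In short: same skeleton, different (and viable, but less economical and more delicate) treatment of the cohomological vanishing.
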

\begin{proof}
Let us show that $X$ and $Y$ satisfy the conditions of Theorem \ref{RamsTh}.
By Lemma \ref{LemRams} the divisor $X$ is big and nef, therefore the toric Kawamata-Viehweg vanishing theorem (\cite[Theorem~9.3.10]{CLS}) implies $H^i\big(Y,\mathcal{O}_Y(K_Y+2X)\big)=0$, $i>0$. Thus by the toric Serre Duality theorem (\cite[Theorem~9.2.10]{CLS}) we have $(i)$. Using the same argument we get $(ii)$.
 
As $\operatorname{Cl}(Y)\cong\mathds{Z}\oplus\mathds{Z}$ the following sequence is exact by \cite[Theorem~8.1.6]{CLS}.

\begin{align*}
0\to \bar{\Omega}_Y^1(-X)\to\bigoplus_{\alpha \in \Theta}\mathcal{O}_Y(-D_\alpha-X)\to \mathcal{O}_Y(-X)\oplus \mathcal{O}_Y(-X)\to 0.
\end{align*}
By taking associated long exact sequence of cohomologies and applying $(ii)$ we get
\begin{align*}
0\to H^i\big(Y,\bar{\Omega}_Y^1(-X)\big)\to \bigoplus_{\alpha \in \Theta} H^i\big(Y,\mathcal{O}_Y(-D_\alpha-X)\big), \quad i=1,2,3.
\end{align*}
It is enough to prove that $H^i\big(Y,\mathcal{O}_Y(-D_\alpha-X)\big)=0$ for all $s\in Z$, $i=1,2,3$. If $s\in{u,v,z,w}$ then $D_\alpha+X$ is big and nef, therefore by Serre duality and Kawamata-Viehweg vanishing $H^i\big(Y,\mathcal{O}_Y(-D_\alpha-X)\big)=0$. Let $F$ be a fiber of $\pi$. Then $\mathcal{O}_F(F)\cong \mathcal{O}_F$ and the sequence
\begin{align*}
0\to\mathcal{O}_Y\to\mathcal{O}_Y(F)\to\mathcal{O}_F\to0
\end{align*}
is exact. Hence the sequences
\begin{align*}
H^i(Y,\mathcal{O}_Y)\to H^i\big(Y,\mathcal{O}_Y(F)\big)\to H^i(F,\mathcal{O}_F), \quad i=1,2,3
\end{align*}
are exact. Applying \cite[Theorem~9.3.2]{CLS} we see that
\begin{align*}
H^i(Y,\mathcal{O}_Y)=H^i(F,\mathcal{O}_F)=0, \quad i=1,2,3,
\end{align*}
hence $H^i\big(Y,\mathcal{O}_Y(F)\big)=0$. As $-X-D_y\sim_\mathds{Q}-X-D_y\sim_\mathds{Q} K_Y-F$ by toric Serre duality 
\begin{align*}
H^i\big(Y,\mathcal{O}_Y(-D_x-X)\big)\cong H^{4-i}\big(Y,\mathcal{O}_Y(F)\big)=0.
\end{align*}
The condition $H^2(Y,\bar{\Omega}_Y^1)=0$ is satisfied because $Y$ is a complete toric simplicial variety. 

Let $\widetilde{X}$ be the resolution of $X$ acquired by blowing up the $36$ double points. We have shown $X$ and $Y$ satisfy the requirements of Theorem \ref{RamsTh} therefore $\operatorname{rk}\operatorname{Pic}(\widetilde{X})=38$. On the other hand $\operatorname{rk}\operatorname{Pic}(\widetilde{X})\geqslant \operatorname{rk}\operatorname{Cl}(X)+36$, thus $\operatorname{rk}\operatorname{Pic}(X)=\operatorname{rk}\operatorname{Cl}(X)=2$ and $X$ is $\mathds{Q}$-factorial. Clearly, the divisors $D_y\big\vert_X$ and $F\big\vert_X$ are generators.
\end{proof}

\begin{Lemma} \label{KBig3}
The anticanonical class of the variety $X$ is not big.
\end{Lemma}
\begin{proof}
We have $K_Y=-7D_y-17F$, therefore by adjunction the anticanonical class of $X$ is $-K_X=D_y\big\vert_X-F\big\vert_X$. Thus $\big|-nK_X\big|=\emptyset$ for any $n>0$.
\end{proof}

\begin{Lemma}[{\cite[Proposition~3.2]{JPDP}}]
Let $F$ be a del Pezzo surface of degree $1$ and let $C\in\big|-K_F\big|$ be an irreducible curve. Suppose $(F,C)$ is not log canonical at $P$ and $F$ is smooth at $P$. Then $C$ is a cuspidal rational curve. 
\end{Lemma}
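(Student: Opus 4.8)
The plan is to combine adjunction on the del Pezzo surface with the classification of planar double points and the standard values of log canonical thresholds.

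\emph{Reducing to a double point.} Since $C\sim-K_F$ and $K_F^2=1$, we have $C^2=1$ and $K_F\cdot C=-1$, so adjunction gives $2p_a(C)-2=C\cdot(K_F+C)=0$, i.e.\ $p_a(C)=1$. As $C$ is irreducible and reduced, $p_a(C)=p_g(C)+\sum_Q\delta_Q(C)$, hence $\sum_Q\delta_Q(C)\leqslant1$. If $C$ were smooth at $P$, the pair $(F,C)$ would be log smooth, in particular log canonical, at $P$, contrary to hypothesis; so $C$ is singular at $P$. Combined with $\sum_Q\delta_Q(C)\leqslant1$ this forces $\delta_P(C)=1$, forces $C$ to be smooth at every other point, and gives $p_g(C)=0$. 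From $\delta_P(C)\geqslant\binom{\mult_P C}{2}$ we then get $\mult_P C=2$.

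\emph{Identifying the singularity.} Let $\sigma\colon\widetilde F\to F$ be the blow-up of the smooth point $P$, let $E$ be its exceptional curve, and let $\widetilde C=\sigma^*C-2E$ be the strict transform of $C$. Then $\widetilde C^2=C^2-4=-3$ and $K_{\widetilde F}\cdot\widetilde C=(\sigma^*K_F+E)\cdot(\sigma^*C-2E)=K_F\cdot C+2=1$, so $p_a(\widetilde C)=0$. Since geometric genus is a birational invariant, $p_g(\widetilde C)=p_g(C)=0$, and since $\widetilde C$ is reduced and irreducible this forces $\widetilde C$ to be smooth and rational. Hence the singularity of $C$ at $P$ is resolved by a single blow-up of a double point, so it is analytically either an ordinary node or an ordinary cusp; in particular $C$ is rational.

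\emph{Excluding the node.} If $C$ had an ordinary node at $P$, then $(F,C)$ would be log canonical at $P$, because the log canonical threshold of $\{xy=0\}$ at the origin of $\MA^2$ equals $1$; this contradicts the hypothesis. Therefore $C$ has an ordinary cusp at $P$, so $C$ is a cuspidal rational curve.

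The argument is short and, beyond adjunction, uses only two classical facts: that a curve singularity resolved by a single blow-up of a double point is a node or a simple cusp, and that $(F,C)$ is log canonical at a node but not at a cusp. So I do not anticipate a genuine obstacle; the only point that deserves a moment's care is that the genus computations go through unchanged if $C$ happens to meet a Du Val point of $F$ different from $P$, since $C$ is smooth at such a point and we blow up only $P$.
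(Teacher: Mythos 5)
Your argument is correct. Note that the paper does not prove this lemma at all: it is quoted verbatim from Park (\cite[Proposition 3.2]{JPDP}), so there is no internal proof to compare against; what you have written is a complete, self-contained justification along the classical lines one would expect. Your chain is sound: $C\sim -K_F$ Cartier gives $p_a(C)=1$ by adjunction, the hypothesis ``not log canonical at a smooth point of $F$'' forces $C$ singular at $P$, the genus formula $p_a=p_g+\sum\delta$ then pins down $\delta_P=1$, $p_g=0$, $\mult_P C=2$, and $\operatorname{lct}(xy)=1$ excludes the node, leaving the simple cusp. Two small remarks. First, your blow-up step is redundant: once you know $P$ is a planar double point (local equation $y^2=x^k$, since $F$ is smooth at $P$) with $\delta_P=1$, the classification already gives $A_1$ or $A_2$, i.e.\ node or cusp; the computation $p_a(\widetilde C)=0$ is a fine alternative but proves nothing extra. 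Second, the only genuinely delicate point is the one you flag at the end: the adjunction computations on $F$ and on $\widetilde F$ need $C$ (and $\widetilde C$) Cartier and the surfaces Gorenstein, which is automatic here because the statement is applied (Corollary 4.8) to del Pezzo surfaces with at worst ordinary double points and $C\sim-K_F$; for a merely log terminal, non-Gorenstein del Pezzo the numerics would need revisiting, but that case is outside the scope of the lemma. So the proposal stands as a valid proof, arguably more informative than the paper's bare citation.
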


%\begin{Cor}
%Let $F$ be a del Pezzo surface of degree $1$ with at worst ordinary double points and let $D\sim-K_F$ be a $\mathds{Q}$-Cartier divisor. If $(F,D)$ is not log canonical at $P$, then there is a rational curve $C\in\big|-K_F\big|$ such that $P\in C$ is a cusp.
%\end{Cor}

\begin{Lemma}[{\cite[proof of Theorem~3.10]{Corti}}] \label{LemCorti}
Let $X$ be a $3$-dimensional variety and let $D$ be an effective $\mathds{Q}$-Cartier divisor on $X$. Suppose $P\in X$ is an ordinary double point, let $f:X^+\to X$ be the blow up of $P$, and let $E$ be the exceptional divisor. Let $\mathcal{M}$ be a mobile linear system on $X$, then the pair $(X,\lambda \mathcal{M})$ is not canonical at the point $P$ if an only if $a(E,X,D)<0$.
\end{Lemma}

The theorem we refer to ({\cite[Theorem~3.10]{Corti}}) states that the only extremal contraction to the ordinary double point in the category of terminal $\mathds{Q}$-factorial varieties is the blow up of the ordinary double point. To show this Corti proves that $a(F,X,\lambda\mathcal{M})$ is minimal if and only if $F=E$, which implies Lemma \ref{LemCorti}.

\begin{proof}[Proof~of~Theorem~1.1~(ii)]
Let $\chi$ be a birational map to a variety admitting a Mori fiber space and suppose $\chi$ is not an isomorphism. Let $F$ be a fiber of $\pi$. Then by Proposition 2.8 there is a linear system $\mathcal{M}$ and numbers $\lambda,s\in \mathds{Q}_{>0}$ such that $K_X+\lambda\mathcal{M}\sim_\mathds{Q} sF$ and $(X,\lambda \mathcal{M})$ is not canonical. 

Suppose the pair is not canonical at a curve $C$, then $\operatorname{mult}_C \lambda \mathcal{M}>1$. 
Suppose $C \subset F$ for some fiber $F$ and let $D$ be a generic divisor in $\mathcal{M}$.
Then we have $\lambda D \vert_F = C + C^\prime$, where $C^\prime$ is effective.
Thus 
$$D \cdot F \cdot (-K_X) > C \cdot (-K_X)$$
 since $-K_X$ is $\pi$-ample.
Since $\Sing (X)$ consists of only ordinary double points, the variety $X$ is Gorenstein, in particular $K_X$ is a Cartier divisor.
Thus $C\cdot (-K_X) = k \geqslant 1$, $k \in \MZ$.
On the other hand 
$$\lambda D \cdot F \cdot (-K_X) = K_X^2 \cdot F = 1$$
 since $\pi: X \to \MP^1$ is a del Pezzo fibration of degree $1$.
Combining it all together we get
\begin{align*}
1=\lambda D \cdot F \cdot (-K_X) > C \cdot (-K_X)\geqslant 1,
\end{align*}
contradiction.

Suppose $C$ is not in any fiber, then for a generic fiber $F$ let $P\in F\cap C$. 
Since $F$ is generic and $\mathcal{M}$ is mobile, the system $\mathcal{M}\big\vert_F$ does not have fixed components. Clearly $\operatorname{mult}_P \lambda\mathcal{M}\big\vert_F >1$, but for generic curves $Z_1, Z_2\in \mathcal{M}\big\vert_F$ we have

\begin{align*}
1=\lambda^2 Z_1\cdot Z_2\geqslant \lambda^2(Z_1\cdot Z_2)_P\geqslant \big( \operatorname{mult}_P \lambda\mathcal{M}\big\vert_F \big)^2 >1,
\end{align*}
where the first equality holds since $F$ is a del Pezzo surface of degree $1$.

Thus the pair is not canonical at some point $P$. Let $F$ be the fiber containing $P$, let $f:X^+\to X$ be the blow up of $P$, let $E$ be the exceptional divisor of $f$, let $D^+$ be the proper transform of $D$, and let $a$ be the number defined by the equality 
$f^*(\lambda D)=\lambda D^+ + a E$. Let $C$ be a generic curve in $\big| -2K_F \big|$ passing through $P$ and let $C^+$ be its proper transform on $X^+$. Note that $\lambda D\cdot C=-2K_F^2=2$.

Suppose $P$ is an ordinary double point of $X$. Then 
\begin{align*}
K_{X^+}+\lambda D^+(a-1)E \sim f^*(K_X+\lambda D),
\end{align*}
and $a>1$ by Lemma \ref{LemCorti}. The surface $F$ is singular at $P$ since $F$ is a Cartier divisor on $X$. The inequality $C^+\cdot E\geqslant 2$ holds since $F$ is singular at $P$ and $C$ is a Cartier divisor on $F$, therefore
\begin{align*}
0\leqslant \lambda D^+\cdot C^+=f^*(\lambda D)\cdot C^+ - a C^+\cdot E = \lambda D\cdot C-2a = 2-2a<0
\end{align*}
contradiction.

Thus $X$ is smooth at $P$. Then $a=\operatorname{mult}_P \lambda D>1$ since the pair $(X,\lambda D)$ is not canonical at $P$. Suppose $P$ is a singular point of $F$. The inequality $C^+\cdot E\geqslant 2$ holds again, therefore
\begin{align*}
0\leqslant \lambda D^+\cdot C^+=f^*(\lambda D)\cdot C^+ - a C^+\cdot E \leqslant \lambda D\cdot C-2a = 2-2a<0,
\end{align*}
contradiction.

Thus we may assume that $F$ is also smooth at $P$. Let $D$ be a generic divisor in a linear system $\mathcal{M}$. Then the pair $(F, \lambda D\big\vert_F)$ is not log canonical at $P$ by Remark \ref{Adj}. Let $C\in \big| -K_F \big|$ be the curve passing through $P$. By construction $C$ is smooth or nodal, therefore $(F,C)$ is log canonical. Let $A\in \big| -nK_F \big|$ such that $\operatorname{Supp}(A)$ does not contain $C$. Then 
\begin{align*}
\operatorname{mult}_P A \leqslant A\cdot C = n,
\end{align*}
hence the pair $(F,\frac{1}{n}A)$ is log canonical at $P$. Therefore $\big(F,\alpha C + \frac{1-\alpha}{n}A\big)$ is log canonical at $P$ for any $0\leqslant \alpha \leqslant 1$. In particular $(F,\lambda D\big\vert_F)$ is log canonical, contradiction.
\end{proof}

\begin{Remark}
It was shown in \cite[Corollary~7.5]{CSh11} that a del Pezzo fibration $\pi: X\to \mathds{P}^1$ of degree $1$ is birationally rigid if for every del Pezzo fibration $\pi_Y: Y\to \mathds{P}^1$, such that there is a fiberpreserving birational map $\chi: X \dasharrow Y$, the variety $Y$ satisfies the $K$-condition. It is unclear how to check the requirements of the corollary and there are no known examples of varieties satisfying this property. The variety $X$ from Theorem \ref{TechTh} satisfies this property trivially, however: $\pi:X\to \mathds{P}^1$ is a unique del Pezzo fibration in the birational class and $X$ satisfies the $K$-condition.
\end{Remark}

%%%%%%%%%%%%%%%%%%%%%%%%%%%%%%%%%%%%%%%%%%%%%%%%%%%%%%%%%%%
%%%%%%%%%%%%%%%%%%%%%%%%%%%%%%%%%%%%%%%%%%%%%%%%%%%%%%%%%%%
%%%%%%%%%%%%%%%%%      Epilogue      %%%%%%%%%%%%%%%%%%%%%%
%%%%%%%%%%%%%%%%%%%%%%%%%%%%%%%%%%%%%%%%%%%%%%%%%%%%%%%%%%%
%%%%%%%%%%%%%%%%%%%%%%%%%%%%%%%%%%%%%%%%%%%%%%%%%%%%%%%%%%%

\section{Rationally connected non-Fano type varieties}
In this section we prove Theorem \ref{MainTh} and discuss generalizations.

\begin{Prop} \label{MMP}
Let $X$ be a variety of Fano type. Then there is a variety $V$ admitting a Mori fiber space such that $V$ is birational to $X$ and $-K_V$ is big.
\end{Prop}
\begin{proof}
There is an effective divisor $D$ on $X$ such that $-(K_X+D)$ is ample and $(X,D)$ is klt. Since $-K_X$ is a sum of an effective and an ample divisor it is big. If $X$ is not $\mathds{Q}$-factorial we may replace it by its $\mathds{Q}$-factorization $Y$ (\cite[Corollary~1.4.3]{BCHM}). Note that there is a morphism $g:Y\to X$ which is an isomorphism in codimension $1$, therefore  $-K_Y$ is big and $(Y,D_Y)$ is klt, where $D_Y$ is the proper transform of $D$ on $Y$. If $X$ is $\mathds{Q}$-factorial, set $Y=X$. Since $-K_Y$ is $\mathds{Q}$-Cartier and $(Y,D_Y)$ is klt, the variety $Y$ has log terminal singularities. There is a terminalization morphism $f:Z\to Y$, that is a birational morphism such that $Z$ has terminal singularities and all exceptional divisors $E_i$ of $f$ satisfy $a(E_i,Y)\leqslant 0$ (\cite[Corollary 1.4.3]{BCHM}). The anticanonical class of $Z$ is big since
\begin{align*}
-K_Z=-f^*K_Y-\sum a(E_i,Y)E_i.
\end{align*}
Suppose $V$ is a result of running the MMP on $Z$. We claim that $-K_V$ is big. Indeed, for a divisorial contraction $h:W\to U$ with the exceptional divisor $E$ we can write
\begin{align*}
-h^*K_U=-K_W + a E, \quad a>0,
\end{align*}
hence $-K_U$ is big if $-K_W$ is big. Isomorphisms in codimension $1$ preserve the property of divisors being big, therefore the anticanonical class is big on every step of the MMP, in particular $-K_V$ is big. The variety $X$ is rationally connected by \cite[Theorem~1]{Zh} therefore $V$ admits a Mori fiber space.
\end{proof}

\begin{proof}[Proof~of~Theorem~1.2]
Suppose $X$ (or $V$) is birational to a variety of Fano type. Then by Proposition \ref{MMP} the variety $X$ (resp. $V$) is birational to a variety $Y$ admitting a Mori fiber space and satisfying $-K_Y$ is big. Theorem \ref{TechTh} implies $Y\cong X$ (resp. $Y\cong V$) but $-K_X$ (resp. $-K_V$) is not big by Lemma \ref{KBig3} (Lemma \ref{KBig4}), contradiction. 
\end{proof}

\begin{Lemma} \label{RC}
The varieties $V$ and $X$ described in Theorem \ref{TechTh} are rationally connected.
\end{Lemma}
\begin{proof}
Since $V$ and $X$ are Fano fibrations over $\mathds{P}^1$ they are rationally connected by \cite[Theorem~0.1]{KMM} and \cite[Corollary~1.3]{GHJ}.
\end{proof}

Thus the varieties $V$ and $X$ are examples of rationally connected varieties which are not birational to varieties of Fano type.

\begin{Remark}
We could also construct an example by using fibrations onto Fano hypersurfaces of index one. But it is more tiresome and provides examples only for dimension $\geqslant 9$.
\end{Remark}

\begin{Remark}
While this paper have been under review other examples in dimension three have been constructed in \cite{K16}. The example there are conic bundles with sufficiently big discriminant curve. More precisely, Kollár gives examples of rationally connected threefolds which are not birational to Calabi-Yau pairs.
\end{Remark}

\begin{Remark} 
We can run $D$-MMP on a Mori dream space for any divisor $D$ which is not nef \cite[Proposition~1.11]{HK}. Thus, we have another class of varieties which behave very well under the $D$-MMP. It has been proven in \cite[Corollary~1.3.1]{BCHM}, that every $\mathds{Q}$-factorial variety of Fano type is a Mori dream space. The converse is not true even for smooth Mori dream spaces: there exists a smooth rational Mori dream space of dimension $2$ which is not of Fano type \cite[Section~3]{BRS}. In fact, a $\mathds{Q}$-factorial normal projective variety is of Fano type if and only if it is a Mori dream space and spectrum of its Cox ring has only log terminal singularities \cite[Theorem~1.1]{GOST}. One could ask Question \ref{Q1} for Mori dream spaces instead of varieties of Fano type.
We expect that our examples in dimension $\geqslant 4$ are Mori dream spaces but we do not know anything about dimension $3$.
\end{Remark}


\begin{thebibliography}{99}
%
\def\selectlanguageifdefined#1{
\expandafter\ifx\csname date#1\endcsname\relax
\else\language\csname l@#1\endcsname\fi}
\ifx\undefined\url\def\url#1{{\small #1}}\else\fi
\ifx\undefined\BibUrl\def\BibUrl#1{\url{#1}}\else\fi
\ifx\undefined\BibAnnote\long\def\BibAnnote#1{}\else\fi
\ifx\undefined\BibEmph\def\BibEmph#1{\emph{#1}}\else\fi
\def\cprime{$'$} \def\mathbb#1{\mathbf#1}
  \def\bblapr{April}\def\polhk#1{\setbox0=\hbox{#1}{\ooalign{\hidewidth
  \lower1.5ex\hbox{`}\hidewidth\crcr\unhbox0}}}
%

%
\bibitem{BCHM}
\selectlanguageifdefined{english}
\BibEmph{C.~Birkar, P.~Cascini, C.~D.~Hacon~,J.~$\text{M}^\text{c}$Kernan,} Existence of minimal models for varieties of log general type,~ 
  \BibEmph{J.~Amer.~Math.~Soc.},
\newblock \textbf{23} (2010), 405-468.
%
\bibitem{CG}
\selectlanguageifdefined{english}
\BibEmph{P.~Cascini, Y.~Gongyo,} On the anti-canonical ring and varieties of Fano type,~
  \BibEmph{Saitama~Math.~J.},
\newblock \textbf{30} (2013), 27-38.
%
\bibitem{Cheltsov-Fano}
\selectlanguageifdefined{english}
\BibEmph{I.~Cheltsov,} Birationally rigid Fano varieties,~
  \BibEmph{Russian Math. Surveys},
\newblock \textbf{60} (2005), no. 5,  875-965.

\bibitem{Ch08}
\selectlanguageifdefined{english}
\BibEmph{I.~Cheltsov,} Double spaces with singularities,~
  \BibEmph{Mat. Sb.},
\newblock \textbf{199} (2008), no. 1-2, 291-306.

\bibitem{CC}
\selectlanguageifdefined{english}
\BibEmph{I.~Cheltsov,} On singular cubic surfaces,~
  \BibEmph{Asian~J.~Math},
\newblock \textbf{Vol. 13, No. 2} (2009), 191-214.

\bibitem{CSh11}
\selectlanguageifdefined{english}
\BibEmph{S.~R.~Choi, V.~Shokurov,} Geography of log models: theory and applications,~
  \BibEmph{Central European J. of Math.},
\newblock \textbf{Vol. 9, No. 3} (2011), 489-534.

\bibitem{Clemens}
\selectlanguageifdefined{english}
\BibEmph{H.~Clemens,} Double solids,~
  \BibEmph{Adv. in Math.},
\newblock \textbf{47} (1983), no. 2, 107-230.

\bibitem{CLS}
\selectlanguageifdefined{english}
\BibEmph{D.~Cox, J.~Little, H.~Schenck,} Toric varieties,~
  \BibEmph{Graduate Studies in Mathematics, AMS},
\newblock \textbf{Vol. 124} (2011).

%\bibitem{ChS}
%\selectlanguageifdefined{english}
%\BibEmph{I.~Cheltsov, C.~Shramov,} Log canonical thresholds of smooth Fano threefolds. With an appendix by Jean-Pierre Demailly,~
%  \BibEmph{Russian Mathematical Surveys},
%\newblock \textbf{Vol. 63:5} (2008), 859-958.

\bibitem{Corti}
\selectlanguageifdefined{english}
\BibEmph{A.~Corti,} Singularities of linear systems and $3$-fold birational geometry,~ 
  \BibEmph{London Math. Soc. Lecture Note Series},
\newblock 281, 259-312,
\newblock 2000.
no. 1-2
%
\bibitem{GHJ}
\selectlanguageifdefined{english}
\BibEmph{T.~Graber, J.~Harris, J.~Starr.} Families of rationally connected varieties,~
  \BibEmph{J.~Amer.~Math.~Soc.},
\newblock \textbf{16} (2003), 57-67.
%
\bibitem{GOST}
\selectlanguageifdefined{english}
\BibEmph{Y.~Gongyo, S.~Okawa, A.~Sannai, S.~Takagi,} Characterization of varieties of Fano type via singularities of Cox rings,~
  \BibEmph{J. Algebraic Geom},
\newblock \textbf{24} (2015), no. 1, 159-182.


\bibitem{HK}
\selectlanguageifdefined{english}
\BibEmph{Y.~Hu,~S.~Keel,} Mori dream spaces and GIT,~
	  \BibEmph{Michigan Math.~J.},
\newblock \textbf{48} (2000), 331-348.

\bibitem{AdjK}
\selectlanguageifdefined{english}
\BibEmph{M.~Kawakita,} Inversion of adjunction on log canonicity,~
	  \BibEmph{Invent. Math.},
\newblock \textbf{167, no. 1} (2007), 129–133.

\bibitem{K}
\selectlanguageifdefined{english}
\BibEmph{J.~Kollár,} Flips and abundance for algebraic threefolds,~
  \BibEmph{Asterisque},
\newblock \textbf{211} (1992), Soc. Math. France, Paris.

\bibitem{KSP}
\selectlanguageifdefined{english}
\BibEmph{J.~Kollár,} Singularities of pairs,~
  \BibEmph{Algebraic geometry—Santa Cruz},
\newblock (1995),  221–287.

\bibitem{K16}
\selectlanguageifdefined{english}
\BibEmph{J.~Kollár,} Conic bundles that are not birational to numerical Calabi--Yau pairs,~
  \BibEmph{Preprint},
\newblock (2016), arXiv:1605.04763.

\bibitem{KMM}
\selectlanguageifdefined{english}
\BibEmph{J.~Kollár, Y.~Miyaoka, S.~Mori,} Rationally connected varieties,~
  \BibEmph{J.~Alg.~Geom.},
\newblock \textbf{1}(1992), no. 3, 429-448.


\bibitem{JPDP}
\selectlanguageifdefined{english}
\BibEmph{J.~Park,} Birational maps of del Pezzo fibrations,~
	  \BibEmph{J. Reine Angew. Math.},
\newblock \textbf{vol. 538} (2001), 213-221.

\bibitem{Pr_C}
\selectlanguageifdefined{english}
\BibEmph{Yu.~G.~Prokhorov,} Lectures on complements on log surfaces,~
  \BibEmph{MSJ Memoirs},
\newblock \textbf{10} (2001), Math. Soc. of Japan.
%
\bibitem{PS}
\selectlanguageifdefined{english}
\BibEmph{Yu.~G.~Prokhorov, V.~V.~Shokurov,} Towards the second main theorem on complements,~
  \BibEmph{J. Algebraic Geom.},
\newblock \textbf{18, no. 1} (2009), 151-199.
%
\bibitem{Pukh123} %[S93]
\selectlanguageifdefined{english}
\BibEmph{A.~V.~Pukhlikov,} Birational automorphism of three-dimensional algebraic varieties with a pencil of del Pezzo surfaces,~
  \BibEmph{Izv. Math.},
\newblock 62, no. 1, 115–155,
\newblock 1998.

%\bibitem{Pems}
%\selectlanguageifdefined{english}
%\BibEmph{A.~V.~Pukhlikov,} Essentials of the method
%of maximal singularities,~
%  \BibEmph{London~Math.~Soc.~Lec.~Notes~Series},
%\newblock \textbf{281} (2000), 73-100.
%
\bibitem{Pukh_Dir_Prod}
\selectlanguageifdefined{english}
\BibEmph{A.~V.~Pukhlikov,}  Birational geometry of Fano direct products,~
  \BibEmph{Izvestiya: Math.},
\newblock \textbf{69, No. 6} (2005), 1225-1255,

\bibitem{Pukh04}
\selectlanguageifdefined{english}
\BibEmph{A.~V.~Pukhlikov,}  Birationally rigid varieties with a pencil of Fano double covers. II,~
  \BibEmph{Sbornik: Mathematics},
\newblock \textbf{195} (2004), no. 11-12, 1665-1702,
%
\bibitem{Pukh_Sing_Fano}
\selectlanguageifdefined{english}
\BibEmph{A.~Pukhlikov,} Birational geometry of singular Fano varieties,
  \BibEmph{Proc.~Steklov~Inst.~Math.},
\newblock \textbf{264, No. 1} (2009), 159-177.
%
%\bibitem{Pukh_fiber_spaces}
%\selectlanguageifdefined{english}
%\BibEmph{A.~V.~Pukhlikov,}  Birationally rigid varieties. II. Fano fiber spaces,~
%  \BibEmph{Russian~Math.~surveys},
%\newblock \textbf{65, No. 6} (2010), 1083-1171.
%
\bibitem{Rams}
\selectlanguageifdefined{english}
\BibEmph{S.~Rams,}  Defect and hodge numbers of hypersurfaces,~
  \BibEmph{Adv. Geom.},
\newblock \textbf{8} (2008), 257-288.
%
\bibitem{BRS}
\selectlanguageifdefined{english}
\BibEmph{D.~Testa, A.~Varilly-Alvarado, M.~Velasco,} Big rational surfaces,~
  \BibEmph{Math.~Ann.},
\newblock \textbf{351} (2011), no. 1, 95-107.

%\bibitem{Tian}
%\selectlanguageifdefined{english}
%\BibEmph{G.~Tian,} On Kähler–Einstein metrics on certain Kähler manifolds with $c_1(M) > 0$,~
%  \BibEmph{Inventiones Mathematicae},
%\newblock \textbf{89} (1987), 225-246.

\bibitem{Zh}
\selectlanguageifdefined{english}
\BibEmph{Q.~Zhang,} Rational connectedness of log $\mathds{Q}$-Fano varieties,~
  \BibEmph{J. Reine Angew. Math.},
\newblock \textbf{590} (2006), 131-142.
\end{thebibliography}
\end{document}